\definecolor{brightcerulean}{rgb}{.26, .41, .88}
\DeclareMathOperator{\im}{im}
\DeclareMathOperator{\id}{id}
\newcommand{\bN}{\mathbf{N}}
\newcommand{\bF}{\mathbb{F}}
\newcommand{\cL}{\mathcal{L}}
\newcommand{\cO}{\mathcal{O}}
\DeclareMathOperator{\res}{res}
\DeclareMathOperator{\rk}{rk}
\newcommand{\into}{\rightarrow}
\DeclareMathOperator{\tors}{tors}
\newcommand{\comment}[1]{}
\newcommand{\Q}{\mathbb{Q}}
\newcommand{\cD}{\mathcal{D}}
\newcommand{\cS}{\mathcal{S}}
\newcommand{\K}{{\mathbb K}}
\newcommand{\cU}{\mathcal{U}}
\newcommand{\Z}{\mathbb{Z}}
\newcommand{\F}{\mathbb{F}}
\newcommand{\Zp}{\Z_p}
\renewcommand{\O}{\mathcal{O}}
\DeclareMathOperator{\disc}{disc}
\DeclareMathOperator{\ord}{ord}
\theoremstyle{plain}
\newtheorem{proposition}[equation]{Proposition}
\newtheorem{lemma}[equation]{Lemma}
\newtheorem{conjecture}[equation]{Conjecture}
\theoremstyle{definition}
\newtheorem{definition}[equation]{Definition}
\newtheorem{question}[equation]{Question}
\newtheorem*{Quest}{Question}
\newtheorem*{Quests}{Questions}
\newtheorem{remark}[equation]{Remark}
\newtheorem{example}[equation]{Example}
\numberwithin{equation}{section}
\numberwithin{figure}{section}
\numberwithin{table}{section}
\newcounter{listnum}
\theoremstyle{definition}
\newtheorem{algor}[equation]{Algorithm}
{\end{algor}}
\lstdefinelanguage{Sage}[]{Python}
{morekeywords={True,False,sage,singular},
sensitive=true}
\definecolor{lightyellow}{rgb}{1,1,.86}
\definecolor{dblackcolor}{rgb}{0.0,0.0,0.0}
\definecolor{dbluecolor}{rgb}{.01,.02,0.7}
\definecolor{dredcolor}{rgb}{0.8,0,0}
\definecolor{dgraycolor}{rgb}{0.30,0.3,0.30}
\definecolor{graycolor}{rgb}{0.35,0.35,0.35}
\newcommand{\G}{\mathbf{G}}
\newcommand{\bP}{\mathbb{P}}
\newcommand{\bpsi}{{\overline\psi}}
\newcommand{\fE}{{\widehat E}}
\newcommand{\ecl}[1]{\href{https://www.lmfdb.org/EllipticCurve/Q/#1/}{\texttt{#1}}}
\title{Shadow Line Distributions}
\author{Jennifer S. Balakrishnan}
\address{Jennifer S. Balakrishnan, Department of Mathematics \& Statistics, Boston University, 665 Commonwealth Avenue, Boston, MA 02215, USA}
\email{jbala@bu.edu}
\author{Mirela \c{C}iperiani}
\address{Mirela \c{C}iperiani, Department of Mathematics, The University of Texas at Austin, 1 University Station, C1200, Austin, Texas 78712, USA}
\email{mirela@math.utexas.edu}
\author{Barry Mazur}
\address{Barry Mazur, Department of Mathematics, Harvard University, 1 Oxford Street, Cambridge, MA, 02138, USA}
\email{mazur@g.harvard.edu}
\author{Karl Rubin}
\address{Karl Rubin, Department of Mathematics, UC Irvine, Irvine, CA 92697, USA}
\email{krubin@math.uci.edu}
\begin{document}

\begin{abstract}Let $E$ be an elliptic curve over $\Q$ with Mordell--Weil rank $2$ and $p$ be an odd prime of good ordinary reduction. For every imaginary quadratic field $K$ satisfying the Heegner hypothesis, there is (subject to the Shafarevich--Tate conjecture) a line, i.e., a free $\Z_p$-submodule of rank $1$,  in $ E(K)\otimes \Z_p$ given by  universal norms coming from the Mordell--Weil groups of subfields of the anticyclotomic $\Z_p$-extension of $K$; we call it the {\it shadow line}. When the twist of $E$ by $K$ has analytic rank $1$, the shadow line is conjectured to lie in $E(\Q)\otimes\Z_p$; we verify this computationally in all our examples. We study the distribution of shadow lines in $E(\Q)\otimes\Z_p$ as $K$ varies, framing conjectures based on the computations we have made.
\end{abstract}

\date{\today}
\maketitle
\tableofcontents

\section*{Perspective}

\footnotetext{
2010 Mathematics Subject Classification. 11G05, 11G50, 11Y40. \\
\indent \, \,Key words and phrases. Elliptic curve, Universal Norm, Anticyclotomic p-adic Height, Shadow Line 
\medskip
}

There were hints of this in the work of Jacobi before, but it was  Poincar{\'e} in his 1901 paper {\it Sur les propri{\'e}t{\'e}s arithm{\'e}tiques des courbes alg{\'e}briques} \cite{Poincare} who pointed out that the set of rational points on an elliptic curve has a natural (abelian)  {\it group structure}:
\begin{quote} 
{\'E}tudions d'abord la distribution des points rationnels sur ces courbes.
J'observe que la connaissance de deux points rationnels sur une cubique rationnelle suffit pour en faire conna\^ itre un troisi{\`e}me. 
 \end{quote}
 Even though Poincar{\'e} also suggests in his paper that the group of rational points on an elliptic curve is finitely generated, it took two decades before this was actually proved  to be the case{\footnote{(Mordell, 1922). Such groups of rational points,  a bit later, were called  Mordell--Weil groups, since Andr{\'e} Weil had generalized Mordell's Theorem to prove that the group of $K$-rational points of any abelian variety over any number field $K$ is finitely generated.}}, allowing us to focus on the finite fundamental invariant: the {\it rank} of the group of rational points.

 Almost half a century later, N{\'e}ron and Tate---by defining the canonical (\emph{N{\'e}ron-Tate}) height on any abelian variety over a number field---established a further canonical structure on the Mordell--Weil groups of elliptic curves:  the quotient of such Mordell--Weil groups  by their torsion subgroups can be viewed canonically, up to orthogonal isometry, as {\it discrete  lattices in Euclidean space of dimension equal to their rank}.  Subsequently, analogous (canonical)  $p$-adic height inner products were defined on Mordell--Weil groups for all prime numbers $p$.   
 
 All this provides an intricate interlacing structure on  what might have initially been regarded to be a simple arithmetic feature of an elliptic curve over $\Q$: its {\it  set of rational points.}  The object of this paper is to consider  further arithmetic architecture canonically constructible on this set.   Namely, for any elliptic curve $E$ over $\Q$ with Mordell--Weil rank $2$ and for any prime number $p$ we will be defining (canonically)  a web consisting  of (we conjecture: {\it infinitely many}) $\Z_p$-lines in $E(\Q)\otimes{\Z_p}$ coming from the Mordell--Weil behavior of $E$ over a specific set of (we conjecture: correspondingly infinitely many) quadratic imaginary fields.
 \bigskip

\section{Introduction: the shadow lines} \label{intro} Let $E/\Q$ be an elliptic curve of analytic rank $2$ and $p$ an odd prime of good ordinary reduction such that the rational $p$-torsion $E(\Q)_p$ is trivial. Assume that the $p$-primary part of the Shafarevich--Tate group of $E/\Q$ is finite. Then consider an imaginary quadratic field $K$ such that the analytic rank of $E/K$ is $3$ and the Heegner hypothesis holds for $E$, i.e., all primes dividing the conductor of $E/\Q$ split in $K$. We are interested in the subspace of $E(K) \otimes \Zp$ generated by the anticyclotomic universal norms. 

 To define this space, let $K_\infty$ be the anticyclotomic $\Zp$-extension of $K$ and let $K_n$ denote the subfield of $K_\infty$ whose Galois group over $K$ is isomorphic to $\Z/p^n\Z$.  The module of \textit{universal norms}  with respect to $K_\infty/K$ is defined by
\[
\cU_K = \bigcap_{n \geq 0} N_{K_n/K}(E(K_n) \otimes \Zp),
\]
where $N_{K_n/K}$ is the norm map induced by the map $E(K_n) \to E(K)$ given by $P \mapsto \underset{\sigma\in \mathrm{Gal}(K_n/K)}\sum P^\sigma$.
Let $L_K$ denote the $p$-divisible closure of  $\cU_K$ in $E(K) \otimes \Zp$.

By work of Cornut \cite{Cornut} (see the Theorem in the Introduction and the discussion after it) and Vatsal \cite[Theorem 1.4]{Vatsal} on the nontriviality of Heegner points we know that $\rk_{\Z_p} L_K = 1$ if the $p$-primary part of the Shafarevich--Tate group of $E/K_n$ is finite for every $n\in \bN$, see \cite[Corollary 4.4]{MRgrowth}. Complex conjugation acts on $E(K) \otimes \Zp$ and it preserves $\cU_K$. Consequently $L_K$ lies in one of the corresponding eigenspaces $E(K)^+\otimes \Zp$ and $E(K)^-\otimes \Zp$. Observe that $E(K)^+\otimes \Zp =E(\Q) \otimes \Zp$. Under our assumptions, by work of Skinner--Urban \cite{SU}, Nekov\'a\v r \cite{Nekovar}, Gross--Zagier \cite{GZ}, and Kolyvagin \cite{Kolyvagin} we know that
\[
\rk_{\Z_p} E(K)^+\otimes \Zp \geq 2 \indent \text{and}\indent \rk_{\Z_p} E(K)^-\otimes \Zp =1.
\]
Then by the Sign Conjecture \cite{MRgrowth}, we expect $L_K$ to lie in $E(\Q) \otimes \Zp$. Our main motivating questions are the following: 

\begin{Quests}[\cite{MRgrowth}]
As $K$ varies, we presumably get different shadow lines $L_K$. 
\begin{enumerate} 
\item What are these lines and how are they distributed in $E(\Q) \otimes \Zp$? 
\item Does the shadow line $L_K$ determine the quadratic field $K$ uniquely?
\end{enumerate}
\end{Quests}

In order to identify the shadow line $L_K$, we use the fact that $\cU_K$ lies in the kernel of the anticyclotomic $p$-adic height pairing $\langle \, , \, \rangle: E(K) \otimes \Zp \times E(K)  \otimes \Zp \to \Zp$, see \cite[Proposition 4.5.2]{MTpairing}. The use of this pairing forces us to assume that $p$ splits in $K/\Q$ as otherwise the pairing is trivial.  Due to the action of complex conjugation, it follows that the restriction of the pairing to either eigenspace $E(K)^\pm$ is trivial. Then since $\rk_{\Z_p} E(K)^-\otimes \Zp =1$ and if $\rk_{\Z_p} E(\Q)\otimes \Zp =2$, after verifying the non-triviality of the pairing we can deduce that $L_K$ equals the kernel of the anticyclotomic $p$-adic height pairing and $L_K \subseteq E(\Q) \otimes \Zp$, see  \cite{ShadowComp} for further details. We are hence able to identify $L_K$ by fixing a basis of $E(\Q) \otimes \Zp$ and using the anticyclotomic $p$-adic height pairing to compute the slope $s_K$ of $L_K$, see \S \ref{Comp}.

Now studying the distribution of shadow lines $L_K$ can be done by studying the variation of slopes $s_K=(x_K, y_K) \in \bP^1(\Zp)$, which we can view modulo $p^n$ for $n\in \mathbb N$. Note that equidistribution of the shadow lines $L_K$ corresponds to the statement that the values of $s_K$ modulo $p^n$ split equally among the $(p+1)p^{n-1}$ options as $n$ grows when we consider all quadratic imaginary fields of conductor up to some reasonable bound satisfying all the conditions that we have set out. 

Our computations indicate that the distribution of shadow lines in $E(\Q) \otimes \Zp$ depends on the kind of ordinary reduction of the elliptic curve at the prime $p$. (See \cite{github} for code and data.)

\begin{definition}\label{defAnomal}A prime number $p$ where $E$ has good ordinary reduction is said to be {\it anomalous} for an elliptic curve $E$ over $\Q$ if equivalently, 
\begin{itemize}
\item $E(\F_p)\otimes{\Z}_p$ is nontrivial; or 
\item  $E(\F_p)\otimes{\F}_p$ is of dimension $1$ over ${\F}_p$; or more specifically:
\item when $p \ge 5$, $E(\F_p)$ is cyclic of order $p$; \\
when $p=3$, $E(\F_p)$ is cyclic of order $3$ or $6$; \\
 when $p=2$, $E(\F_p)$ is  cyclic of order $2$ or $4$.
\end{itemize} 
Otherwise, the prime $p$ is said to be {\it non-anomalous}.
\end{definition}
 
Our computations suggest that  in the case when $p$ is a prime of good ordinary non-anomalous reduction, shadow lines are equidistributed in $E(\Q) \otimes \Zp$, see \S \ref{nonanomalous} and Conjecture \ref{uniform}.  However, this uniformity fails when $p$ is a prime of good ordinary anomalous reduction but then seems to reappear after two rounds of additional restrictions. The first of these restrictions is the consideration of the \textit{receptacle} $H \subseteq  E(\Q)\otimes \Z_p$ of shadow lines (see \eqref{defH}), as we will now describe.

Let $K''/K'$ be a cyclic Galois extension of number fields with $[K'':K']=p$  and  $\wp'$  a prime of $K'$ dividing $p$ that is ramified in $K''/K'$.  Let $\wp''$  be the prime of $K''$  lying above $\wp'$, and denote by $k_{\wp''}$ and  $k_{\wp'}$ the corresponding residue fields. Note that $k_{\wp''}=k_{\wp'}$ since $K''/K'$ is totally ramified at $p$. We have the following diagram:

\begin{equation} \label{towers}
\begin{diagram}[small]
E(K'') &\rTo^=& E(\O_{K''})& \rTo&   E(k_{\wp''}) &\rTo^= &   E(k_{\wp'})&\rTo&  E(k_{\wp'})\otimes \F_p\\
  \dTo_{\mathrm{Norm}} &     &\dTo_{\mathrm{Norm}} & &\dTo_{\mathrm{Norm}} & &\dTo_{\cdot p}&&\dTo_{\cdot p}\\
E(K') &\rTo^=& E(\O_{K'})& \rTo&   E(k_{\wp'})  &\rTo^= &   E(k_{\wp'})&\rTo&  E(k_{\wp'})\otimes \F_p\\
\end{diagram}
\end{equation}

 Since the anticyclotomic extension $K_\infty/K$ has the property that $K_\infty/K_n$ is totally ramified at all primes above $p$ for some sufficiently large $n$, it follows that some subquotient `storey'  $K_{m+1}/K_m$ of that tower is of the form of \eqref{towers}.  Consequently, the diagram
 \[
 \begin{diagram}[small]
 \cU_K &\rTo & \{0\}\\
  \dInto &     &\dInto\\  
   E(\Q)\otimes \Z_p & \rTo &  E(\F_p)\otimes \F_p\\
 \end{diagram}
 \]
\noindent is commutative. Noting that---by the known upper bounds on the number of rational points of an elliptic curve over a finite field---if $p >2$ is a prime of good reduction for $E$, we have the isomorphism 
$$E(\F_p)\otimes \Z_p \stackrel{\simeq}{\longrightarrow} E(\F_p)\otimes \F_p,$$
 we define: 
   \begin{equation}\label{defH}
  H:= \ker\big\{E(\Q)\otimes \Z_p \to E(\F_p)\otimes \Z_p\big\}\  \subseteq \ E(\Q)\otimes \Z_p
  \end{equation}  

\noindent as the \textit{receptacle} for the universal norms in $E(\Q)\otimes \Z_p $. It follows that  
\begin{itemize}
\item if $p$ is non-anomalous then $H= E(\Q)\otimes \Z_p$, and 
 \item if $p\ge 3$ is anomalous for $E$ we have that $H$ is of index $1$ or $p$ in $E(\Q)\otimes \Z_p$ depending on whether the map  $E(\Z_p)\otimes \Z_p \to  E(\F_p)\otimes \Z_p$ is trivial or not.
 \end{itemize} 

Hence in the case of an anomalous prime $p$, many shadow lines $L_K$ may lie in $H$, see \S \ref{sec:anomal}. In order to address this obstruction to the potential equidistribution of shadow lines in the case of anomalous primes, we will study the distribution of 
\[
L'_K:= L_K \cap H \text{ in } H.
\]

\medskip
We will now describe the next set of restrictions. For any imaginary quadratic field $K$ that produces a shadow line for $E$ (i.e., satisfying the Heegner hypothesis) we get the commutative diagram: 
  
\begin{equation} \label{formal}
\begin{diagram}[small]
 \cU_K  &\rInto & H& \rInto&  E(\Q)\otimes \Z_p&\rTo &  E(\F_p)\otimes \Z_p\\
  &\rdTo &  \dTo_{\psi} & &\dTo & &\dTo_{=}\\
& & \fE({\Z}_p) &\rInto &  E(\Z_p)\otimes \Z_p  & \rTo&    E(\F_p)\otimes \Z_p \\
\end{diagram}
\end{equation}  
where $\fE$ denotes the formal group of the elliptic curve $E$.
Notice that by \eqref{formal} we can derive the map 
\[
\bpsi: H \otimes \F_p \to \fE (\Z_p)\otimes \F_p.
\]
Since the algebraic rank of $E/\Q$ is $2$, when $\bpsi$ is non-trivial\footnote{Our computations indicate that this condition fails very rarely.} (see Lemma \ref{LineL}) its kernel is one-dimensional and denoted by 
\begin{equation}\label{defL}
\cL:= \ker \bpsi \subseteq  H \otimes \F_p.
\end{equation} 
We call $\cL$ the  \textit {natural line modulo} $p$. 

To our surprise, we find that most shadow lines $L'_K$ coincide with $\cL$, see Conjecture \ref{shadow-modp}, and we even find verifiable conditions that appear to guarantee that $L'_K\equiv \cL \pmod p$, see Conjecture \ref{anomalModp}. We prove this result under the assumption that the universal norms $\cU_K$ are not $p$-divisible in the receptacle $H$, see Proposition \ref{univH}. Finally, our data indicates that this is the last obstruction to the equidistribution of shadow lines for elliptic curves $E$ without a rational $p$-isogeny, see Conjecture \ref{shadow}.

We conclude our paper by summarizing the evidence that leads us to believe that a shadow line $L_K$ uniquely determines its source field $K$, see \S\ref{shadowField}.

\section{Computations, data, and set-up}\label{Comp}

Fix $E/\Q$ an elliptic curve of analytic rank $2$ and $p$ an odd prime of good ordinary reduction such that the $p$-torsion of $E(\Q)$ is trivial. Consider imaginary quadratic fields $K$ such that
\begin{enumerate}
\item the Heegner hypothesis holds for $E$ and $K$; 
\item the analytic rank of the twisted curve $E^K/\Q$ is 1 and $p$ splits in $\cO_K$.
\end{enumerate}

Under these assumptions we expect that the $\Z_p$-rank of $E(\Q)\otimes \Zp$ equals $2$, and that the shadow line $L_K$ corresponding to $(E,p,K)$  lies in $E(\Q)\otimes \Z_p$. In order to study the variation of shadow lines $L_K$ as $K$ varies, we fix a basis of $E(\Q)\otimes \Z_p$  by choosing two linearly independent points
\[
 P_1, P_2 \in E(\Q)\setminus E(\Q)_{\tors},
 \]
and then study the variation of the slope $s_K$ of $L_K$ with respect to this basis.  

In order to compute the slope $s_K$, for each quadratic field $K$ we choose a non-torsion point
 \[ 
 R\in E(K)^-, 
 \]
 where $E(K)^-$ denotes the $-$eigenspace of $E(K)$ under complex conjugation, see \cite{ShadowComp}.

Then the slope $s_K$ of the shadow line corresponding to $(E,p,K)$ in $E(\Q)\otimes \Z_p$ with respect to the basis $\{P_1, P_2\}$ is
\[
s_{K} = (-\langle P_1, R \rangle, {\langle P_2, R\rangle})\in \bP^1(\Zp),
\]
 where $\langle \cdot, \cdot \rangle$ denotes the anticyclotomic $p$-adic height pairing. Observe that once $K$ is fixed, our choice of $R$ does not affect the slope $s_K$.




\section{Distributions for non-anomalous primes $p$} \label{nonanomalous}

In this section, $p$ is a prime of non-anomalous good ordinary reduction for $E$, see Definition \ref{defAnomal}.  We considered 15 pairs $(E,p)$ and for each pair we computed $s_K$, the slope of shadow line $L_K$, for around 200-300 quadratic fields $K$, produced using the first 700 Heegner discriminants for the field.

We should mention that there is a small amount of loss  in the data: typically we had to skip a small percentage of fields in the range of computation.  These corresponded to fields $K= \Q(\sqrt D)$ for which either 

\begin{enumerate}
\item finding a non-torsion point in $E^K(\Q)$ was difficult (after carrying out $2$-, $4$-, and $8$-descents in Magma), or 
\item after the relevant descents were carried out and a non-torsion point was found, the resulting point had coordinates that were too large for our computations in the following sense: One step in the computation of shadow lines is factoring a denominator ideal in the ring of integers $\cO_K$ of the quadratic field $K$. Since factorization is very difficult, this step had a time limit in place, and some points had coordinates that were so large that this factorization was not completed within the time limit.
\end{enumerate}
We do not expect this loss to bias the resulting distribution in a significant way.

Once we computed slopes, we recorded the value of the slope modulo $p$. For example, consider the distribution of slopes for (\ecl{997.c1}, 3): from computing with 256 fields, we found that
\begin{itemize}
\item 70 fields produced slope $(0, 1) \pmod 3$,
\item 64 fields produced slope $(1,1) \pmod 3$,
\item 65 fields produced slope $(2,1) \pmod 3$,
\item 57 fields produced slope $(1, 0) \pmod 3$.
\end{itemize}

We summarize this data by recording the ordered list $[70, 64, 65, 57]$. We note that we considered 256 out of 260 eligible quadratic imaginary discriminants $D$ for (\ecl{997.c1}, 3), with $D \geq -4628$ (the other 4 eligible discriminants in this range were skipped for one of the two reasons mentioned above). 
\subsection{Non-anomalous data mod $p$}

For each pair $(E, p)$, we carried out the same process, computing the distribution of slopes modulo $p$.
We summarize the data for each $(E,p)$ in the table below with slopes listed in the following order: $ (0,1), \ldots, (p-1,1), (1,0)$. \begin{center}
 \begin{tabular}{| l |  l  | c | c | c| c ||}
    \hline
 $(E,p)$  & slope distribution mod $p$ & $D \geq $ &  eligible $D$ used & \% lost   \\ \hline
(\ecl{709.a1}, 3) & [59, 50, 61, 55] & $-4376$ & 225/241 & 6.6\% \\  
(\ecl{997.c1}, 3) & [70, 64, 65, 57]  & $-4628$  & 256/260 & 1.5\%\\

(\ecl{1627.a1}, 3) & [69, 54, 64, 54]  & $-4691$  & 241/246 & 2.0\%\\

(\ecl{2677.a1}, 3) &[50, 62, 52, 61]  & $-4559$ & 225/234 & 3.8\% \\
\hline
(\ecl{709.a1}, 5) &[48, 52, 38, 44, 43, 41] & $-4376$ & 266/276 & $3.6\%$ \\
(\ecl{1531.a1}, 5) & [36, 42, 46, 44, 44, 42] & $-4344$ & 254/269 & 5.6\%\\ 
(\ecl{1621.a1}, 5) & [43, 39, 57, 47, 49, 39] & $-4811$ & 274/280& $2.1\%$\\
(\ecl{1873.a1}, 5) & [59, 43, 43, 50, 45, 37] & $-4879$ & 277/284 & $2.5\%$ \\
(\ecl{1907.a1}, 5) &  [43, 34, 39, 32, 34, 43] & $-4004$ & 225/240 & 6.3\% \\ 
(\ecl{1933.a1}, 5) &  [39, 47, 36, 48, 57, 55] & $-4804$  & 282/288 & 2.1\% \\
\hline
(\ecl{643.a1}, 7) &  [24, 31, 24, 29, 34, 34, 33, 26] & $-3827$ & 235/248 & 5.2\%\\
(\ecl{709.a1}, 7) & [24, 33, 40, 28, 29, 24, 33, 33] &  $-3863$ & 244/255 & 4.3\% \\
 (\ecl{997.c1}, 7) & [33, 27, 24, 37, 31, 22, 29, 24] & $-3811$ & 227/233& 2.6\%\\ 
 (\ecl{1613.a1}, 7) & [44, 41, 43, 23, 33, 25, 32, 41]& $-4623$ & 282/290 & 2.8\% \\
 (\ecl{1627.a1}, 7)& [34, 41, 39, 47, 26, 33, 44, 30]  &  $-4679$ & 294/298& 1.3\%\\
 \hline 
 \end{tabular}
 \end{center}

 \bigskip
 
 This data suggests that the shadow lines are equidistributed in $E(\Q)\otimes \F_p$.
 
 \subsection{Non-anomalous data mod $p^2$}\label{secondLevel}
We now look at the distributions of $s_K$ modulo $p^2$.  We display the data for the coefficient of $p$ of the first entry of $s_K$ if $s_K \not \equiv (1,0) \pmod p$ and the corresponding data for the second entry of $s_K$ if $s_K= (1,0) \pmod p$ corresponding to the findings of the above table.

\bigskip

\begin{tabular}{cc}
    \begin{minipage}{.5\linewidth}
       \begin{tabular}{| l  |  l  |}
 \hline
 $(E,p)$ & mod $p^2$ distribution \\ \hline
(\ecl{709.a1}, 3) &  59: [18, 18, 23] \\
& 50: [14, 17, 19] \\
& 61: [19, 18, 24]  \\
& 55: [14, 18, 23] \\
 \hline
 (\ecl{997.c1}, 3) & 70: [22, 19, 29]\\
&  64: [24, 20, 20] \\
& 65: [23, 22, 20] \\
& 57: [21, 15, 21]\\
\hline
(\ecl{1627.a1}, 3) & 69: [21, 24, 24]\\
& 54: [23, 20, 11] \\
& 64: [26, 21, 17] \\
& 54: [19, 14, 21] \\
\hline
(\ecl{2677.a1}, 3) & 50: [16, 15, 19]\\
& 62: [15, 24, 23]\\
& 52: [14, 20, 18]\\
& 61: [24, 18, 19] \\
\hline
 (\ecl{709.a1}, 5) & 48: [4, 10, 12, 11, 11]\\
 & 52: [11, 12, 11, 9, 9]\\
 & 38: [6, 12, 12, 4, 4]\\
 & 44: [5, 7, 6, 15, 11]\\
 & 43: [8, 8, 9, 12, 6]\\
 & 41: [8, 7, 12, 9, 5]\\
 \hline
 (\ecl{1531.a1}, 5) & 36: [9, 2, 6, 7, 12]\\
& 42: [4, 6, 8, 12, 12]\\
& 46: [10, 9, 9, 6, 12]\\
& 44: [5, 8, 8, 16, 7]\\
& 44: [9, 11, 11, 7, 6] \\
& 42: [3, 9, 9, 8, 13] \\
 \hline
 (\ecl{1621.a1}, 5) & 43: [8, 12, 13, 5, 5]\\
 & 39: [7, 6, 10, 11, 5]\\
 & 57: [15, 12, 11, 8, 11]\\
 & 47: [8, 9, 12, 10, 8]\\
 & 49: [9, 10, 8, 13, 9]\\
 & 39:  [9, 9, 8, 4, 9] \\
 \hline
  (\ecl{1873.a1}, 5) & 59: [16, 13, 8, 12, 10]\\
  &43: [11, 10, 7, 11, 4]\\
  &43: [11, 6, 7, 10, 9]\\
  &50: [9, 7, 17, 8, 9]\\
  &45: [10, 6, 8, 12, 9]\\
  &37: [7, 8, 7, 7, 8] \\
 \hline
 (\ecl{1907.a1}, 5) & 43: [8, 7, 10, 12, 6]\\
& 34:  [3, 1, 12, 9, 9]\\
& 39:  [3, 8, 10, 11, 7]\\
& 32:  [9, 9, 5, 2, 7]\\
& 34:  [4, 4, 6, 12, 8]\\
& 43:  [7, 13, 8, 6, 9]\\
 \hline
  \end{tabular}
    \end{minipage} &

    \begin{minipage}{.5\linewidth}
     \begin{tabular}{| l |  l  |}
 \hline
 $(E,p)$ & mod $p^2$ distribution \\ \hline

 (\ecl{1933.a1}, 5) & 39: [7, 7, 7, 9, 9]\\
& 47: [7, 12, 8, 9, 11]\\
& 36: [5, 12, 5, 7, 7]\\
& 48: [9, 11, 8, 12, 8]\\
& 57: [10, 14, 10, 9, 14]\\
& 55: [15, 7, 10, 16, 7]\\
 \hline
 (\ecl{643.a1}, 7) &24: [2, 4, 4, 3, 3, 2, 6]\\
 &31: [2, 7, 8, 5, 3, 3, 3]\\
& 24: [3, 2, 1, 2, 3, 6, 7]\\
 &29: [6, 2, 6, 8, 2, 3, 2]\\
 &34: [4, 6, 7, 5, 2, 7, 3]\\ 
 &34: [7, 4, 8, 6, 4, 1, 4]\\
 &33: [2, 6, 9, 2, 5, 4, 5]\\
 &26: [1, 5, 4, 7, 5, 1, 3] \\
 \hline
 (\ecl{709.a1}, 7) & 24: [3, 1, 7, 3, 4, 0, 6] \\
 &33: [6, 6, 6, 3, 4, 5, 3]\\
 &40: [5, 5, 8, 3, 9, 5, 5]\\
 &28: [5, 6, 3, 3, 2, 6, 3]\\
 &29: [6, 2, 3, 2, 6, 7, 3]\\
 &24: [2, 8, 2, 3, 3, 3, 3]\\
 &33: [4, 5, 3, 4, 8, 5, 4]\\
 &33: [3, 4, 3, 7, 4, 7, 5]\\
 \hline
 (\ecl{997.c1}, 7) & 33: [4, 3, 7, 8, 4, 3, 4]\\
 & 27: [4, 3, 4, 3, 4, 7, 2]\\
 & 24: [4, 2, 3, 4, 1, 5, 5]\\
 & 37: [8, 3, 6, 5, 5, 3, 7]\\
 & 31: [5, 7, 5, 2, 4, 4, 4]\\
 & 22: [5, 3, 5, 3, 1, 2, 3]\\
 & 29:  [3, 9, 4, 3, 3, 2, 5]\\
 & 24: [4, 5, 3, 1, 5, 3, 3]\\
 \hline
 (\ecl{1613.a1}, 7) & 44: [2, 5, 8, 7, 5, 10, 7]\\
& 41: [7, 6, 4, 8, 3, 7, 6] \\
& 43: [8, 2, 5, 10, 6, 5, 7]\\
& 23: [2, 2, 2, 2, 3, 6, 6]\\
& 33: [4, 5, 4, 8, 6, 3, 3]\\
& 25: [0, 2, 5, 4, 5, 4, 5]\\
& 32: [10, 5, 4, 3, 1, 4, 5]\\
& 41: [6, 2, 4, 6, 10, 6, 7]\\
 \hline
 (\ecl{1627.a1}, 7) & 34: [3, 8, 6, 6, 2, 4, 5] \\
 & 41: [8, 3, 8, 4, 6, 4, 8]\\
 & 39:  [7, 6, 7, 2, 5, 8, 4]\\
 & 47:  [4, 6, 12, 5, 8, 4, 8]\\
 & 26:  [4, 6, 2, 4, 3, 4, 3]\\
 & 33:  [4, 6, 4, 5, 4, 5, 5]\\
 & 44:  [6, 5, 4, 8, 8, 7, 6]\\
 & 30:  [8, 2, 4, 4, 5, 2, 5] \\
 \hline
 \end{tabular}

    \end{minipage} 
\end{tabular}
 
\begin{conjecture}\label{uniform}
Let $E/\Q$ be an elliptic curve of analytic rank $2$, $p$ an odd prime of good ordinary non-anomalous reduction, and $K$ an imaginary quadratic field satisfying the Heegner hypothesis for $E$ such that the analytic rank of the twisted curve $E^K/\Q$ is 1 and $p$ splits in $K$. 

Then the distribution of shadow lines $L_K$ in $E(\Q)\otimes \Z_p$ is uniform.
\end{conjecture}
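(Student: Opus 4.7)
The plan is to reduce Conjecture \ref{uniform} to an equidistribution statement for anticyclotomic $p$-adic heights of Heegner points against a fixed basis of $E(\Q)\otimes\Z_p$, and then to invoke refinements of the Cornut--Vatsal equidistribution theorems for Heegner points on Shimura varieties.

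First, fix the basis $\{P_1,P_2\}$ of $E(\Q)\otimes\Z_p$ from \S\ref{Comp}. Since $s_K = (-\langle P_1,R\rangle : \langle P_2,R\rangle) \in \bP^1(\Z_p)$ is independent of the choice of non-torsion $R \in E(K)^-$, I would take $R = y_K$, the Heegner point attached to $K$ via the modular parametrization; by Cornut--Vatsal, $y_K$ is non-torsion for all but finitely many admissible $K$. The conjecture then becomes the statement that the slopes $(-\langle P_1,y_K\rangle : \langle P_2,y_K\rangle)$ equidistribute in $\bP^1(\Z_p)$ with respect to the $\PGL_2(\Z_p)$-invariant probability measure; equivalently, their reductions modulo $p^n$ are asymptotically uniform in $\bP^1(\Z/p^n\Z)$ for every $n\ge 1$.

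Next, I would seek an explicit formula for $\langle P_i, y_K\rangle$ by decomposing the anticyclotomic height into local terms and applying Coleman integration together with the anticyclotomic $p$-adic Gross--Zagier formula (Perrin-Riou, Howard, Disegni). This should express each mixed height as a Coleman integral of a differential of the second kind on $E$ from $P_i$ to a suitable Heegner divisor, plus local corrections at primes of bad reduction. The non-anomalous hypothesis is essential here: it guarantees that the reduction map $E(\Z_p)\otimes\Z_p \to E(\F_p)\otimes\Z_p$ is an isomorphism, so that no proper sub-line in $E(\Q)\otimes\F_p$ plays the attractor role that the natural line $\cL$ of \eqref{defL} plays in the anomalous case. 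I would then combine this formula with equidistribution results for Heegner points on quaternionic Shimura varieties (Cornut--Vatsal, Michel, Michel--Venkatesh) to deduce uniform distribution of the pair $(\langle P_1,y_K\rangle, \langle P_2,y_K\rangle) \in \Z_p\times\Z_p$ modulo $p^n$, up to simultaneous scaling by $\Z_p^\times$, and thereby of the slope $s_K$ in $\bP^1(\Z_p)$.

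The hard part is bridging geometric or adelic equidistribution of Heegner points with $p$-adic equidistribution of numerical height invariants modulo $p^n$. Cornut--Vatsal-type theorems yield non-triviality and some density, but not the fine $p$-adic equidistribution the conjecture demands, and it is not clear that even effective subconvexity bounds for the relevant Rankin--Selberg $L$-functions would suffice without a delicate analysis of $p$-adic valuations of heights. In the absence of such tools the conjecture appears beyond reach, and substantial new ideas --- perhaps $p$-adically sensitive refinements of known equidistribution theorems, or an ergodic-theoretic analysis of the Galois action on the Heegner locus --- seem to be required.
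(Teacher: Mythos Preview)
The statement you are addressing is a \emph{conjecture} in the paper, not a theorem: the authors offer no proof, only numerical evidence (the tables in \S\ref{nonanomalous}) that the slopes $s_K$ appear equidistributed modulo $p$ and modulo $p^2$ for a few hundred fields $K$ and a handful of pairs $(E,p)$. So there is no ``paper's own proof'' to compare your proposal against.

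Your proposal is not a proof either, and to your credit you say so in the final paragraph. The outline you give --- take $R = y_K$ a Heegner point, express the anticyclotomic heights $\langle P_i, y_K\rangle$ via Coleman integrals and a $p$-adic Gross--Zagier formula, then feed in Cornut--Vatsal/Michel--Venkatesh equidistribution --- is a reasonable shape for an attack, but the decisive step is exactly the one you flag as missing: there is currently no mechanism for converting adelic or archimedean equidistribution of Heegner points into equidistribution of the $p$-adic valuations (and further $p$-adic digits) of height pairings. Cornut--Vatsal gives nonvanishing of $y_K$ for all but finitely many $K$, and subconvexity-type inputs control archimedean sizes of $L$-values, but neither speaks to the residue of $\langle P_i, y_K\rangle$ modulo $p^n$. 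One should also note a small wrinkle in your first reduction: the Heegner point $y_K$ lives in $E(K)$, not a priori in $E(K)^-$, so one must project to the minus eigenspace (or verify that the relevant component is non-torsion), and the conjecture is about \emph{all} admissible $K$, whereas Heegner points can be torsion for finitely many $K$. These are minor compared to the main gap, which you have correctly identified; the honest conclusion is that Conjecture~\ref{uniform} remains open and your sketch does not close it.
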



\section{Distributions for anomalous primes $p$} \label{sec:anomal}

In this section, we consider primes $p$ where $E$ has good ordinary \emph{anomalous} reduction, see Definition \ref{defAnomal}. Repeating the same process as in \S\ref{nonanomalous} for anomalous primes $p$ produces visibly different distributions.  We started the investigation in the anomalous case considering the position of shadow lines in $E(\Q)\otimes \Z_p$ but soon realized that this had to be refined. Here is one example illustrating what we observed:

\begin{example}
Consider (\ecl{709.a1}, 29): we compute the slopes of shadow lines for $208$ imaginary quadratic fields $K$ satisfying the necessary hypotheses (with $D \geq  -3012$; we skip $10$ such values of $D$ in the eligible set). Here we find that all $208$ slopes $s_K=(x_K, 1)$  with $x_K= 13 +29a_K$ where $a_K\in \Zp$. 

When we view the slopes modulo $p^2$ we again observe a bias because we find $200$ fields $K$ with $a_K\equiv 24 \pmod{29}$, $2$ fields with $a_K \equiv 27 \pmod{29}$, and only $1$ field for each of $a_K \in \{ 2,7,8,10, 11,13\}$ modulo $29$.
  
\end{example}

The modulo $p$ bias is easy to explain. As described in \S\ref{intro}, since the anticyclotomic $\Z_p$-extension is eventually totally ramified, the module of universal norms reduces to $0$ in $E(\F_p)\otimes \Z_p$. Hence, universal norms lie in the receptacle $H=\ker\big\{E(\Q)\otimes \Z_p \to E(\F_p)\otimes \Z_p\big\}\  \subseteq \ E(\Q)\otimes \Z_p$
which, in the case of anomalous primes $p$, may differ from $E(\Q)\otimes \Z_p$.  

We now record how we compute the receptacle $H$.

\begin{remark}\label{H-comp}
Let $E/\Q$ be an elliptic curve of algebraic rank $2$, and $p$ an odd prime of anomalous good ordinary reduction for $E$.   Consider $P_1, P_2 \in E(\Q)$ such that 
\[
E(\Q) =\Z P_1+ \Z P_2 + E(\Q)_{\tors}.
\]
Set $a_i$ to be the $p$-part of the order of the image of $P_i$ in $E(\F_p)$. Depending on the values of $(a_1, a_2)$, $H$ is generated\footnote{Notice that since $\# E(\F_p)\leq 2p <p^2$ it follows that the $p$-primary torsion of $E(\F_p)$ has order $p$ and $a_i \in \{1,p\}$.} by $\{Q_1, Q_2\}$ defined as follows:

\begin{itemize}
\item{\bf{ Case 1}}: \indent $H=E(\Q)\otimes \Z_p$
\[
a_1 = a_2 = 1: \{Q_1, Q_2\}:=\{P_1, P_2\}.
\]
\item {\bf{Case 2}}:\indent $H \subsetneq E(\Q)\otimes \Z_p$

\begin{flushleft}
 (a) \[\begin{cases} a_1 = 1, a_2 = p:\{Q_1, Q_2\}:= \{P_1, pP_2 \}, \\ a_1 = p, a_2 = 1: \{Q_1, Q_2\}:=\{ p P_1, P_2 \}, \end{cases}\]\\
 (b) \[a_1 = a_2 = p: \{Q_1, Q_2\}:=\{ P_1 + cP_2, p P_2 \}\] 
 where $c \in \Z$ such that the order of the image of $P_1+ c P_2$ in $E(\F_p)$ is coprime to $p$.\\
\end{flushleft}

\end{itemize}
\end{remark}

\begin{remark} Here are some statistics about how the pairs $(E,p)$ of elliptic curves $E$ and relevant primes $p$ sort themselves into the above three cases. Consider all rank 2 elliptic curves over $\Q$ with conductor less than 500,000 and, for each curve, all odd primes of good ordinary anomalous reduction less than 100. There are $304515$  such pairs, and here is the breakdown following the cases of the previous remark: 
\begin{itemize} 
\item Case 1: $1857/304515 \approx 0.6\%$ of all pairs;
\item Case 2: $302658/304515 \approx 99.4\%$ of all pairs, within which, due to our choice of points $P_1, P_2$, we have 
\begin{itemize} 
\item Case 2(a): $54449/304515\approx 17.9\%$ of all pairs,
\item Case 2(b): $248209/304515 \approx 81.5\%$ of all pairs.
\end{itemize}
\end{itemize}\end{remark}

\begin{remark}
Note that Case 1 produces no change in the slope distributions, since in this case, $H = E(\Q)\otimes \Z_p$. However, in Case 2(a), for example, when $a_1 = 1, a_2 = p$, the slopes we produce considering the shadow line $L'_K= L_K \cap H$ in the receptacle $H$ versus the shadow line $L_K$ in $E(\Q)\otimes \Z_p$ have an extra factor $p$ in the second component unless $L_K=\Z_pP_2$.
\medskip

Observe that in the most common Case 2(b),  the relation between the slope $s_K=(x_K, y_K)\in \bP^1(\Z_p)$ of the shadow line $L_K$ viewed in $E(\Q)\otimes \Z_p$ is related to the slope  $s'_K$ of the shadow line $L'_K$ viewed in $H$ as follows: 
\[ 
s'_K = ({x_K}-c{y_K}, py_K).
\]
Then since the constant $c$ depends on the pair $(E, p)$ but not on the quadratic field $K$, the bias modulo $p^2$ that we saw in the above data for (\ecl{1483.a1}, 31) survives modulo $p$.
\end{remark}

We will now display some data about the distribution of the slopes of shadow lines $L'_K= L_K \cap H$ in the receptacle $H$. For clarity, we review our setup and set some notation.  Let $Q_1, Q_2$ be generators of $H$ computed as described in Remark \ref{H-comp}, and let $R$ be a non-torsion point of $E(K)^-$. Then we compute slopes $s'_K$ of shadow lines $L'_K$ in $H$:
\[
s'_K=(-{\langle Q_1, R\rangle},{\langle Q_2, R\rangle})
\]
for each eligible imaginary quadratic field $K$. We record the distribution of slopes of shadow lines modulo $p$ below.

\subsection{Anomalous data mod $p$} \label{anomalMODp}

\begin{center}
 \begin{tabular}{| l  | c | l  | c | c | c |c  |}
    \hline
 $(E,p)$ & case & slope distribution mod $p$  &  mode & $D \geq$ & $D$ used & $\%$ lost  \\ 
 \hline     

 (\ecl{433.a1}, 3)   & case 2  &[25, 21, 26, 208]& $(1,0)$ & $-5240$ & 280/299& 6.4\% \\
 (\ecl{643.a1}, 3)     & case 2  & [25, 28, 139, 36]  &  (2,1) & $-4520 $ & 228/239 &  4.6\%  \\ 
 (\ecl{1058.a1}, 3) & case 2& [23, 25, 20, 25]& $?$ & $-8015$ & 93/150& 38\% \\ 
 (\ecl{1483.a1}, 3) & case 2 & [32, 147, 28, 29] & (1,1) & $-4631$ & 236/247& 4.5\% \\
 (\ecl{1613.a1}, 3) & case 2 & [24, 164, 31, 50] & (1,1) & $-4631$ &269/276 & 2.5\%\\
 (\ecl{1933.a1}, 3) & case 2 & [43, 24, 170, 33] & (2,1)& $-4835$ & 270/272& 0.7\% \\
 (\ecl{6293.d1}, 3) & case 2 & [23, 21, 22, 46] & $(1,0)$ & -12899 & 112/149 & 24.8\% \\
 (\ecl{36781.b1}, 3)  & case 1   &  [33, 24, 116, 19] & (2,1) & $-3923$ & 192/206 & 6.8 \% \\ 
\hline

 (\ecl{433.a1}, 5)     & case 2   &  [21, 8, 13, 193, 11, 16] & (3,1) & $-4631$ & 262/272 & 3.7\%  \\
 (\ecl{563.a1}, 5)     & case 2   & [14, 17, 170, 16, 10, 8] & (2,1) & $-3199$ & 235/261 & 10.0\%  \\ 
 (\ecl{997.c1}, 5) &case 2& [10, 17, 23, 15, 192, 14] &(4,1) & $-4619$ & 271/273& 0.7\%\\
  \hline
  
  (\ecl{6011.a1}, 7) & case 2 & [13, 9, 11, 7, 226, 8, 10, 5] & (4,1) & $-4591$ & 289/298 & 3.0\% \\
 \hline
 (\ecl{2251.a1}, 11) & case 2 &[2, 1, 3, 3, 2, 2, 2, 3, 181, 2, 4, 0]& (8,1) & $-3559$ & 205/235  & 12.8\% \\ 
\hline
 (\ecl{1933.a1}, 13) & case 2 & [2, 4, 2, 2, 1, 1, 1, 4, 3, 2, 1, 8, 229, 4] & (12,1) & $-4835$ &264/275 & 4.0\%  \\
\hline
 
 (\ecl{709.a1}, 29)  &  case 2    & \tiny{$[0, 0, 1, 0, 0, 0, 0, 1, 1, 0, 1, 1, 0, 1, 0, 0, 0, 0, 0, 0, 0, 0, 0, 0, 196, 0, 0, 2, 0, 0]$} & (24,1)  & $-3012$& 204/218& 6.4\% \\

 \hline
(\ecl{1483.a1}, 31)  & case 2   &  \tiny{$[1, 1, 0, 0, 0, 1, 0, 0, 0, 0, 0, 0, 0, 0, 0, 0, 1, 0, 0, 0, 196, 0, 0, 1, 0, 0, 2, 0, 0, 0, 0, 0]$} & (20,1) & $-3080$ &203/224 & 9.4\%  \\ 
\hline
\end{tabular}
\end{center}
\begin{remark}Most of these distributions were computed starting from a list of 700 Heegner discriminants (with the exception of (\ecl{433.a1}, 3), which used 800 Heegner discriminants and (\ecl{36781.b1}, 3), which used 600 Heegner discriminants). Most were run with a timeout of 600 seconds for the factorization of the denominator ideal, with the exception of (\ecl{36781.b1}, 3) and (\ecl{1058.a1}, 3) which were run with a timeout of 1800 seconds.\end{remark}

These distributions, with the exception of the one for (\ecl{1058.a1}, 3), look non-uniform, with a particular mod $p$ value hit more often than others. We will return to a discussion of the distribution of slopes for (\ecl{1058.a1}, 3) as well as for (\ecl{6293.d1}, 3) in a moment.

\medskip

 Let $s$ be the mode in the modulo $p$ shadow line distribution, i.e., for the majority of fields $K$ the image of the shadow line $L'_K$ in  $H\otimes \F_p$ coincides with the following line in $ H\otimes \F_p$:
\[
\cS:= 
\begin{cases}
(Q_1 + s_0Q_2)\F_p \indent \text{ if } s=(s_0,1)\\
Q_2  \indent \text{ if } s=(1,0).
\end{cases}
\]
We refer to $\cS$ as the \emph{distinguished shadow line modulo $p$} of the elliptic curve $E$.

We will now see how the distinguished shadow line $\cS$ relates to the natural line $\cL$ in $H\otimes \F_p$ defined in \eqref{defL} without reference to quadratic extensions $K$. The natural line $\cL$ is defined only when the map $\bpsi: H \otimes \F_p \to \fE (\Z_p)\otimes \F_p$ is non-trivial. Hence we must now understand the conditions under which this non-triviality holds.

\bigskip

Let $\G$ denote the image of the reduction map $E(\Q)\otimes \Z_p \to E(\F_p)\otimes \Z_p$. We have 
\begin{equation} \label{map-psi}
\begin{diagram}
0 &\rTo&H & \rTo&  E(\Q)\otimes \Z_p &\rTo &  \G &\rTo& 0\\
   &     & \dTo_\psi &&\dTo_{\res_p} &&\dTo_\id&&\\
0 &\rTo& \fE(\Z_p)\otimes \Z_p& \rTo&  E(\Q_p)\otimes \Z_p &\rTo &  E(\F_p)\otimes \Z_p & \rTo & 0\\
\end{diagram}
\end{equation}
where $\psi$ is the natural map that makes the first square commute. We assume throughout that $E(\Q)_p=0$. Then since $E(\F_p)\otimes \Z_p\simeq \Z/p\Z$ , it follows that 
\[
\G=\begin{cases} 0 \qquad\qquad\qquad\qquad\quad\;\;\; \text{in  Case 1}, \\
E(\F_p)\otimes \Z_p\simeq \Z/p\Z_p  \indent \text{in Case 2}.
\end{cases}
\]
Now by considering multiplication-by-$p$ maps on both horizontal exact sequences of \eqref{map-psi}, we get

\begin{equation} \label{map-bpsi}
\begin{diagram}[small] 
   &      &   0           &\rTo   & \G                     &\rTo&H\otimes\F_p                   & \rTo  & E(\Q)\otimes \F_p    &\rTo &  \G &\rTo& 0\\
   &      &                  &        & \dTo_\id               &       &\dTo_\bpsi                         &          &\dTo_{\res_p}           &        &\dTo_\id&&\\
0 &\rTo&   E(\Q_p)_p&\rTo &  E(\F_p)_p & \rTo &\fE(\Z_p)\otimes \F_p & \rTo&  E(\Q_p)\otimes \F_p &\rTo &  E(\F_p)\otimes \F_p & \rTo & 0.\\
\end{diagram}
\end{equation}

\begin{lemma} \label{LineL}
The map $\bpsi: H \otimes \F_p \to \fE (\Z_p)\otimes \F_p$ is non-trivial if and only if the following hold:
\begin{enumerate}
\item $E(\Q_p)_p=0$ and in Case 1, $\res_p E(\Q) \not\subseteq~ p^2E(\Q_p)$.
\item $E(\Q_p)_p\simeq \Z/p\Z$ and
\begin{enumerate}
\item in  Case 1, $\res_p E(\Q) \not\subseteq~ pE(\Q_p)$.
\item  in Case 2, $ res_p (P) \not\in~ p E(\Q_p)$, where $P\in E(\Q)$ is the point defined in \eqref{P-def} and $res_p: E(Q) \to E(Q_p)$.
\end{enumerate}
\end{enumerate}

\end{lemma}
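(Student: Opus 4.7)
The plan is to analyze the commutative diagram \eqref{map-bpsi} by a case split driven by the structure of its bottom row. The rows are the exact sequences produced by applying the snake lemma to the short exact sequences $0 \to H \to E(\Q)\otimes\Z_p \to \G \to 0$ and $0 \to \fE(\Z_p) \to E(\Q_p) \to E(\F_p) \to 0$ with respect to multiplication by $p$. Because $p$ is odd, $\fE(\Z_p)$ is torsion-free, so the bottom connecting map $\delta : E(\F_p)_p \to \fE(\Z_p)\otimes\F_p$ has kernel exactly $E(\Q_p)_p$. In the anomalous setting $E(\F_p)_p \cong \F_p$, so $\delta$ is either an isomorphism (when $E(\Q_p)_p = 0$) or the zero map (when $E(\Q_p)_p \cong \Z/p\Z$), and correspondingly $\fE(\Z_p)\otimes\F_p \to E(\Q_p)\otimes\F_p$ is either zero or injective. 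I split along this dichotomy and, within each branch, further split by whether we are in Case~1 or Case~2 of Remark~\ref{H-comp}.

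Suppose first that $E(\Q_p)_p \cong \Z/p\Z$. The injection $\fE(\Z_p)\otimes\F_p \hookrightarrow E(\Q_p)\otimes\F_p$ together with commutativity of the central square reduces non-triviality of $\bpsi$ to non-triviality of the composite $H\otimes\F_p \to E(\Q)\otimes\F_p \xrightarrow{\res_p} E(\Q_p)\otimes\F_p$. In Case~1 the first arrow is an isomorphism, so the condition becomes $\res_p E(\Q) \not\subseteq pE(\Q_p)$. In Case~2 the image of $H\otimes\F_p$ in $E(\Q)\otimes\F_p$ is the one-dimensional kernel of the surjection onto $\G$, spanned by the class of the point $P$ of \eqref{P-def}; the condition becomes $\res_p(P) \not\in pE(\Q_p)$.

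Suppose next that $E(\Q_p)_p = 0$, so $\delta : \G = E(\F_p)_p \xrightarrow{\sim} \fE(\Z_p)\otimes\F_p$. In Case~2, commutativity of the leftmost square of \eqref{map-bpsi} forces $\bpsi$ restricted to the image of $\G$ in $H\otimes\F_p$ to coincide with $\delta$, so $\bpsi|_\G$ is an isomorphism and $\bpsi$ is non-trivial without any further hypothesis. In Case~1 the top row collapses to $H\otimes\F_p \xrightarrow{\sim} E(\Q)\otimes\F_p$ and $\psi$ is simply $\res_p$ with image in $\fE(\Z_p)$; thus $\bpsi$ is non-trivial iff $\res_p E(\Q) \not\subseteq p\fE(\Z_p)$. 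To convert this into the stated form one uses the snake isomorphism: since every class in $\fE(\Z_p)/p$ comes from $E(\F_p)_p$, every $\eta \in \fE(\Z_p)$ is $p$-divisible in $E(\Q_p)$, whence $p\fE(\Z_p) \subseteq p^2 E(\Q_p)$; conversely, if $p^2 Q \in \fE(\Z_p)$ then $p^2\bar Q = 0$ in $E(\F_p)$, and since $E(\F_p)[p^2] = E(\F_p)[p]$ in the anomalous case, $p\bar Q = 0$, so $pQ \in \fE(\Z_p)$ and $p^2 Q \in p\fE(\Z_p)$. Therefore $p\fE(\Z_p) = p^2 E(\Q_p) \cap \fE(\Z_p)$, and the condition translates to $\res_p E(\Q) \not\subseteq p^2 E(\Q_p)$.

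The chief bookkeeping obstacle sits in Case~2 with $E(\Q_p)_p \cong \Z/p\Z$: one must correctly identify the distinguished generator of $H$ whose class spans the image of $H\otimes\F_p$ in $E(\Q)\otimes\F_p$. By Remark~\ref{H-comp} this is $P_1$, $P_2$, or $P_1 + cP_2$ according to the subcase $(a_1,a_2)$, and this choice should be built into the definition \eqref{P-def} so that the argument in the previous paragraphs goes through uniformly.
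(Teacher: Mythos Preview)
Your proof is correct and follows essentially the same approach as the paper: both analyze the diagram \eqref{map-bpsi} via a four-way case split on $(\text{Case 1 vs.\ Case 2}) \times (E(\Q_p)_p = 0 \text{ vs.\ } \Z/p\Z)$, using the same identifications in each branch. The only cosmetic difference is that you group by the torsion dichotomy first (explaining it via the connecting map $\delta$) whereas the paper groups by Case~1/Case~2 first; in the Case~1, $E(\Q_p)_p=0$ branch the paper simply invokes $\fE(\Z_p)\otimes\Z_p = pE(\Q_p)\otimes\Z_p$ directly rather than your slightly longer derivation of $p\fE(\Z_p)=p^2E(\Q_p)\cap\fE(\Z_p)$, but the content is the same.
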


\begin{proof}
Let us start by considering Case 1. We have two possibilities to analyze:
\begin{itemize}
\item $E(\Q_p)_p=0$: \indent Then $E(\Q_p)\otimes \Z_ p\simeq \Z_p$  and $\fE(\Z_p)\otimes \Z_p = p E(\Q_p)\otimes \Z_ p$ under the natural map  $\fE(\Z_p) \into E(\Q_p)$.  The diagram \eqref{map-bpsi} becomes

\begin{diagram}[small]
   &         & 0                    &\rTo&H\otimes\F_p                   & \rTo  & E(\Q)\otimes \F_p    &     \rTo& 0 &        &\\
   &        & \dTo_\id               &       &\dTo_\bpsi                         &          &\dTo_{\res_p}           &        &\dTo_\id&&\\
0 &\rTo &  \Z/p\Z & \rTo &\fE(\Z_p)\otimes \F_p & \rTo&  E(\Q_p)\otimes \F_p &\rTo &  \Z/p\Z & \rTo & 0\\
\end{diagram}
\noindent and we see that in this case the image of $\bpsi$ is trivial exactly when $\res_p E(\Q) \subseteq~ p^2 E(\Q_p)$ . \\

\item $E(\Q_p)_p\simeq \Z/p\Z$:  \indent Then $E(\Q_p)\otimes \Z_ p\ = \fE(\Z_p)\otimes \Z_p \oplus E(\Q_p)_p$. The diagram \eqref{map-bpsi} becomes

\begin{diagram}[small]
 0 &\rTo&H\otimes\F_p                   & \rTo  & E(\Q)\otimes \F_p    &     \rTo& 0 &        &\\
    &       &\dTo_\bpsi                         &          &\dTo_{\res_p}           &        &\dTo_\id&&\\ 
0  & \rTo &\fE(\Z_p)\otimes \F_p & \rTo&  E(\Q_p)\otimes \F_p &\rTo &  \Z/p\Z & \rTo & 0\\
\end{diagram}
\noindent and we see that in this case the image of $\bpsi$ is trivial exactly when $\res_p E(\Q) \subseteq~ pE(\Q_p)$.   \\
\end{itemize}

We now consider Case 2. 

\begin{itemize}
\item $E(\Q_p)_p=0$: Then the diagram \eqref{map-bpsi} gives the following

\begin{diagram}[small]
0 &\rTo  &\Z/p\Z                   &\rTo&H\otimes\F_p    \\
   &        & \dTo_\id               &       &\dTo_\bpsi       \\
0 &\rTo &  \Z/p\Z & \rTo &\fE(\Z_p)\otimes \F_p \\
\end{diagram}
which implies that the map $\bpsi$ is always non-trivial in these cases.\\

\item $E(\Q_p)_p\simeq \Z/p\Z$:  Then the diagram \eqref{map-bpsi} becomes
\begin{diagram}[small] 
   &      &   0           &\rTo   & \Z/p\Z                  &\rTo&H\otimes\F_p                   & \rTo  & E(\Q)\otimes \F_p    &\rTo &  \Z/p\Z &\rTo& 0\\
   &      &                  &        & \dTo_\id               &       &\dTo_\bpsi                         &          &\dTo_{\res_p}           &        &\dTo_\id&&\\
0 &\rTo&  \Z/p\Z&\rTo &  \Z/p\Z & \rTo &\fE(\Z_p)\otimes \F_p & \rTo&  E(\Q_p)\otimes \F_p &\rTo & \Z/p\Z & \rTo & 0\\
\end{diagram}
\noindent Set 
\begin{equation}\label {P-def}
P:= \begin{cases}
P_i & \text { in Case 2(a) and the order of  } P_i \text{ in }  E(\F_p) \text{ is coprime to } p,\\
P_1 + cP_2  & \text { in Case 2(b)}.
\end{cases}
\end{equation}
and observe that the image of $\bpsi$ is isomorphic to the subgroup of $ E(\Q_p)\otimes \F_p$ generated by the image of $\res_p (P)$. Hence $\bpsi$ is trivial if and only if $ \res_p (P) \in~ p E(\Q_p)$.

\end{itemize}

\end{proof}

We now give an explicit description of the natural line $\cL$. Consider the isomorphism
\begin{align*}
\varphi:  \fE (\Z_p)\otimes \F_p &\to p\Z_p/p^2\Z_p \\
P &\mapsto -\frac{x(P)}{y(P)}.
\end{align*}

\begin{remark}
Let $Q_1,Q_2$ be generators of the receptacle $H$ and $n_1, n_2$ be the orders of their images in $E(\F_p)$, respectively. Then the natural line $\cL$ is generated by
\[
\begin{cases}
Q_2 &  \text{if  } \varphi(\res_p (n_2Q_2)) = 0,\\
Q_1+k Q_2 & \text{else, with  } k\in \F_p \text{ such that } \varphi(\res_p(n_1Q_1)) + (k n_1/n_2) \varphi(res_p(n_2Q_2)) =0.\\
\end{cases}
\]
\end{remark}

\begin{example} We compute the natural line $\cL$ for  (\ecl{433.a1}, 5). The generators of the receptacle $H$ are 
\begin{align*}
Q_1 &= \left(-\frac{21}{25}, \frac{148}{125}\right)\\
Q_2 &=  \left(-\frac{32832}{66049}, -\frac{12229343}{16974593}\right).\end{align*}
Their additive orders in $E(\F_5)$ are 1 and 2, respectively.  We find that $$Q_1 + 4\cdot 2Q_2 = Q_1 + 3Q_2$$ generates the natural line $\cL$. Moreover, the shadow line slope distribution is
\[
[21, 8, 13, 193, 11, 16] ,
\]
which has mode $(3,1)$ and thus the distinguished shadow line $\cS$ is also generated by $Q_1 + 3 Q_2$. Hence $\cS = \cL$. 
\end{example}

Here is a table for more pairs $(E,p)$, showing the relationship between these two lines. Note that the last column contains the slope of the natural line $\cL$ if the stated equality is either unknown or false.

\bigskip

\bigskip
\begin{center}
 \begin{tabular}{| l  | l | c  | c | c | c |c  |}
    \hline
 $(E,p)$  & slope distribution mod $p$  & mode &  $\mathcal{L}  = \mathcal{S}$ \\ 
 \hline     
 (\ecl{433.a1}, 3)   &  [25, 21, 26, 208] & $(1,0)$ & $\checkmark$  \\
 (\ecl{643.a1}, 3)     &  [25, 28, 139, 36] &  (2,1) &  $\checkmark$  \\ 
 (\ecl{1058.a1}, 3) & [23, 25, 20, 25] & $?$ & (1,1) \\
 (\ecl{1483.a1}, 3) & [32, 147, 28, 29]   & (1,1) & $\checkmark$  \\
 (\ecl{1613.a1}, 3) &  [24, 164, 31, 50]  & (1,1) &$\checkmark$  \\
 (\ecl{1933.a1}, 3) &  [43, 24, 170, 33] & (2,1) & $\checkmark$  \\
  (\ecl{6293.d1}, 3) & [23, 21, 22, 46] & $(1,0) $ & (0,1) no \\
 (\ecl{36781.b1}, 3)  &   [33, 24, 116, 19] & (2,1) & $\checkmark$  \\ 
\hline
(\ecl{433.a1}, 5)     &   [21, 8, 13, 193, 11, 16] & (3,1) & $\checkmark$   \\
 (\ecl{563.a1}, 5)     &  [14, 17, 170, 16, 10, 8]  & (2,1) &  $\checkmark$  \\ 
 (\ecl{997.c1}, 5) & [10, 17, 23, 15, 192, 14] & (4,1) & $\checkmark$ \\
 \hline
 (\ecl{6011.a1}, 7) &  [13, 9, 11, 7, 226, 8, 10, 5] & (4,1) & $\checkmark$  \\
 \hline
(\ecl{2251.a1}, 11) &[2, 1, 3, 3, 2, 2, 2, 3, 181, 2, 4, 0] & (8,1) & $\checkmark$ \\ 
\hline
 (\ecl{1933.a1}, 13) & [2, 4, 2, 2, 1, 1, 1, 4, 3, 2, 1, 8, 229, 4] & (12,1) &   $\checkmark$ \\
\hline
 (\ecl{709.a1}, 29)     & \tiny{$[0, 0, 1, 0, 0, 0, 0, 1, 1, 0, 1, 1, 0, 1, 0, 0, 0, 0, 0, 0, 0, 0, 0, 0, 196, 0, 0, 2, 0, 0]$} & (24,1)  &  $\checkmark$\\
 \hline
(\ecl{1483.a1}, 31)    &  \tiny{$[1, 1, 0, 0, 0, 1, 0, 0, 0, 0, 0, 0, 0, 0, 0, 0, 1, 0, 0, 0, 196, 0, 0, 1, 0, 0, 2, 0, 0, 0, 0, 0]$}  & (20,1) & $\checkmark$ \\ 
\hline
\end{tabular}
\end{center}

\bigskip

\begin{conjecture} \label{shadow-modp}
Let $E/\Q$ be an elliptic curve of analytic rank $2$, $p$ an odd prime of good ordinary anomalous reduction, and $K$ an imaginary quadratic field satisfying the Heegner hypothesis for $E$ such that the analytic rank of the twisted curve $E^K/\Q$ is 1 and $p$ splits in $K$.

 If $E$ does not have a rational $p$-isogeny then the shadow lines $L'_K$ are not equidistributed in $H\otimes \F_p$ and the distinguished shadow line $\cS$ of $E$ coincides with the natural line $\cL$ in $H\otimes \F_p$.
\end{conjecture}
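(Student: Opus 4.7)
I would split the conjecture into two independent claims: \emph{(a)} for every eligible $K$ such that the image of $\cU_K$ in $H$ is not $p$-divisible, one has $L'_K\otimes\F_p=\cL$; and \emph{(b)} provided $E$ has no rational $p$-isogeny, the subfamily of eligible $K$ satisfying the non-$p$-divisibility hypothesis of (a) has positive (conjecturally density-one) proportion among the Heegner-admissible fields. Granting both, the mod-$p$ mode $\cS$ of the $L'_K$ must equal $\cL$, and uniform distribution in $H\otimes\F_p$ is ruled out because a positive proportion of the $L'_K$ concentrate on a single line.

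Claim (a) is the content of Proposition \ref{univH}. I would prove it by establishing the stronger containment $\psi(\cU_K)\subseteq p\,\fE(\Z_p)$ at the level of the map $\psi$ in \eqref{formal}, from which the image of $\cU_K$ in $H\otimes\F_p$ lies automatically inside $\ker\bpsi=\cL$. Given $u\in\cU_K$, write $u=N_{K_n/K}(u_n)$ for each $n\ge 0$; take $n$ large enough that $K_\infty/K_n$ is totally ramified at each of the two primes of $K$ above $p$, and localize at such a prime $\wp$. The local component of $u$ at $\wp$ then factors through the formal-group norm along a totally ramified tower of local $\Z_p$-extensions of $\Q_p$. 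For the formal group of an elliptic curve with good ordinary reduction (a height-one formal group, isomorphic to $\widehat{\Gm}$ after an unramified base change), a standard Coleman-norm and local class field theory computation identifies the universal norms in this local tower as a submodule of $p\,\fE(\Z_p)$. This yields $\psi(\cU_K)\subseteq p\,\fE(\Z_p)$, hence the image of $\cU_K$ in $H\otimes\F_p$ lies in $\cL$; under the non-$p$-divisibility hypothesis this image is non-zero and must span the one-dimensional line $L'_K\otimes\F_p$, forcing $L'_K\otimes\F_p=\cL$.

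Claim (b) requires controlling how often the image of $\cU_K$ in $H\otimes\F_p$ is non-zero. Absence of a rational $p$-isogeny means $\rhobar_{E,p}$ has suitably large (absolutely irreducible) image, and I would attack (b) by combining this with the Cornut--Vatsal equidistribution of Heegner points in the anticyclotomic tower: up to controlling a $p$-adic $\mu$-invariant of the universal-norm module, a non-trivial Heegner-point image modulo $p$ propagates to a non-trivial universal-norm image in $H\otimes\F_p$ for a density-one family of $K$. Conversely, when $E$ does admit a rational $p$-isogeny, systematic $p$-divisibility of $\cU_K$ inside $H$ can occur, matching the atypical distributions of $(\ecl{1058.a1},3)$ and $(\ecl{6293.d1},3)$ recorded in the data table.

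\textbf{Main obstacle.} The principal technical hurdle is establishing the local formal-group bound $\psi(\cU_K)\subseteq p\,\fE(\Z_p)$ with full integrality in the anomalous case, where $\fE(\Z_p)$ carries non-trivial $p$-torsion and where the unramified twist identifying $\fE$ with $\widehat{\Gm}$ interacts non-trivially with the ramification in $K_\infty/K_n$; norm-compatibility of Coleman type is available but requires careful bookkeeping of $p$-adic valuations along the totally ramified tower. A comparable obstacle is the rigorous verification of (b): even assuming no rational $p$-isogeny, showing that $\cU_K$ fails to be $p$-divisible in $H$ for a density-one set of $K$ is essentially an anticyclotomic Iwasawa-theoretic statement about the $\mu$-invariant of the universal-norm module, and the requisite non-vanishing input is not yet available in the anomalous ordinary setting considered here.
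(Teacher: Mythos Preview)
The statement you are attempting to prove is a \emph{conjecture} in the paper, not a theorem; the paper offers no proof, only computational evidence (the tables in \S\ref{anomalMODp}).  So there is no ``paper's own proof'' to compare against.

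That said, your decomposition into (a) and (b) matches exactly how the paper organizes its partial progress.  Your claim (a) is indeed Proposition~\ref{univH}, but note that the proposition carries extra hypotheses (trivial $E(\Q_p)_p$, and the condition ensuring $\bpsi$ is non-trivial so that $\cL$ is actually a line) which you have omitted.  The paper's proof of Proposition~\ref{univH} differs from your sketch: rather than Coleman theory and local class field theory, it invokes Mazur's computation \cite[Corollaries 4.33 \& 4.37]{Mazur} giving $\fE(\cO_\wp)/\hat\cU_\wp\simeq\Z_p/(1-u)\Z_p$ for $u$ the unit Frobenius eigenvalue, and then uses the Hasse bound $|a_p|\le 2\sqrt{p}$ to pin down $\ord_p(1-u)=1$ exactly in the anomalous case.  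This is more direct than your route.  Also, your remark that in the anomalous case ``$\fE(\Z_p)$ carries non-trivial $p$-torsion'' is incorrect: for odd $p$ the formal group $\fE(\Z_p)$ is torsion-free (the formal logarithm gives an isomorphism with $p\Z_p$); anomalousness is about $E(\F_p)$, not the formal group.

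Your claim (b) is precisely what the paper does \emph{not} prove.  The paper's response to the gap is not to attempt a density argument via Cornut--Vatsal and $\mu$-invariants, but instead to introduce the ``filtered data'' conditions (class number coprime to $p$, and $R$ not locally $p$-divisible), observe computationally that these force $L'_K\otimes\F_p=\cL$, formulate this as Conjecture~\ref{anomalModp}, and then pose as an open Question whether those filtering conditions imply $\cU_K\not\subseteq pH$.  Your proposed attack on (b) is a reasonable strategy, but it is speculative and the paper makes no such claim; you correctly flag it yourself as an obstacle.
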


\begin{remark}
In the above conjecture we need to assume that $E$ does not have a rational $p$-isogeny. 
For example, in the case of $ (\ecl{1058.a1}, 3) $, the elliptic curve \ecl{1058.a1} admits a rational $3$-isogeny, and the data does not clearly identify $\cS$ and hence it is not clear that $\cS=\cL$. The elliptic curve \ecl{6293.d1} also admits a rational 3-isogeny.
\end{remark}

\subsection{Anomalous data mod $p^2$}

We now investigate the distribution of shadow lines $L'_K$ which coincide modulo $p$ with the distinguished shadow line $\cS$.  As in \S \ref{secondLevel} we look at the distributions of $s'_K$ modulo $p^2$ for the fields $K$ such that the corresponding shadow line coincides with the distinguished mod $p$ shadow line 

\begin{center}
 \begin{tabular}{| l  | l | l  |  }
    \hline
 $(E,p)$  & mod $p^2$ distribution \\ \hline     
 (\ecl{433.a1}, 3)   &  208: [71, 60, 77]  \\
 (\ecl{643.a1}, 3)     & 139: [42, 47, 50]   \\ 
 (\ecl{1483.a1}, 3) &147:  [36, 61, 50]  \\
 (\ecl{1613.a1}, 3) & 164: [45, 62, 57] \\
 (\ecl{1933.a1}, 3) & 170: [59, 57, 54]  \\
 (\ecl{36781.b1}, 3)  &116:  [40, 34, 42]   \\ 

\hline

 (\ecl{433.a1}, 5)     & 193:    [38, 37, 36, 46, 36] \\
  (\ecl{563.a1}, 5)     & 170:  [32, 34, 34, 37, 33]    \\ 
  (\ecl{997.c1}, 5) & 192:  [50, 36, 35, 33, 38] \\  
  \hline
  (\ecl{6011.a1}, 7) &  226:  [24, 37, 28, 37, 41, 27, 32]  \\
 \hline
(\ecl{2251.a1}, 11) & 181: [19, 15, 14, 16, 17, 17, 11, 14, 22, 18, 18] \\ 
\hline
 (\ecl{1933.a1}, 13) & 229:   [12, 26, 20, 15, 23, 9, 21, 18, 18, 17, 18, 22, 10]
 \\
\hline
 (\ecl{709.a1}, 29)     &  196:  [3, 13, 9, 8, 7, 7, 5, 7, 6, 6, 4, 2, 7, 7, 5, 8, 6, 8, 7, 7, 7, 9, 7, 3, 4, 7, 7, 12, 8] \\
 \hline
(\ecl{1483.a1}, 31)    &  196: [7, 7, 5, 8, 7, 3, 4, 3, 7, 8, 10, 5, 8, 7, 11, 4, 9, 7, 3, 10, 6, 9, 3, 8, 4, 4, 3, 8, 7, 7, 4]    \\ 
\hline
\end{tabular}
\end{center}

\noindent For instance, for (\ecl{433.a1}, 5) all slopes are of the form $3 + a \cdot 5 \mod 5^2$, where $a = 0, 1, 2, 3, 4$. The distribution of [38, 37, 36, 46, 36] recorded is the count that of the 193 fields we considered, 38 produced $a = 0$, 37 had $a =1$, and so on.

\begin{conjecture} \label{shadow} 
Let $E/\Q$ be an elliptic curve of analytic rank $2$, $p$ an odd prime of good ordinary anomalous reduction, and $K$ an imaginary quadratic field satisfying the Heegner hypothesis for $E$ such that the analytic rank of the twisted curve $E^K/\Q$ is 1 and $p$ splits in $K$. 

If $E$ does not have a rational $p$-isogeny then the shadow lines $L'_K$ which coincide with $\cS$ in $H\otimes \F_p$ are equidistributed in the receptacle $H\subseteq E(\Q)\otimes \Z_p$.
\end{conjecture}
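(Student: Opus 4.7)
The plan is to reduce Conjecture~\ref{shadow} to a mod-$p^n$ equidistribution statement for anticyclotomic $p$-adic heights of Heegner points, building on the mod-$p$ result (Conjecture~\ref{shadow-modp}, established in Proposition~\ref{univH} under the non-$p$-divisibility hypothesis on $\cU_K$ inside $H$). Fix generators $Q_1, Q_2$ of the receptacle $H$ as in Remark~\ref{H-comp} and write the distinguished line as $\cS = (Q_1 + s_0 Q_2)\F_p$. For each eligible field $K$ with $L'_K \equiv \cS \pmod{p}$, the slope $s'_K = (-\langle Q_1, R\rangle : \langle Q_2, R\rangle)$ can be written as $(s_0 + p\,t_K,\,1)$ for a uniquely determined $t_K \in \Z_p$, and the conjecture reduces to the claim that $t_K$ equidistributes with respect to Haar measure on $\Z_p$, i.e.\ $t_K \bmod p^n$ is uniform in $\Z/p^n\Z$ for every $n \geq 1$.

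First, I would choose $R$ concretely as the Galois-minus projection of a Heegner point on $X_0(N)$ pushed through the modular parametrization. By Perrin-Riou's characterization of the universal norms, this makes $\cU_K$ visibly generated by traces of Heegner points of $p$-power conductor along the anticyclotomic tower. Then, using the Mazur--Tate and Perrin-Riou anticyclotomic height formulas, each pairing $\langle Q_i, R\rangle$ can be expressed as a first-order variation of the anticyclotomic $p$-adic $L$-function of $E/K$, so that the higher $p$-adic digits of the heights---and thus of $t_K$---are controlled by $p$-adic logarithms of Heegner-point traces to $K$.

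Next comes the equidistribution input: as $K$ varies over eligible discriminants, the Galois translates of Heegner cycles should equidistribute strongly enough to force $t_K \bmod p^n$ to be uniform in $\Z/p^n\Z$. The Cornut--Vatsal theorem supplies precisely the analogous nonvanishing modulo $p$ (and underlies the proof of Proposition~\ref{univH}), so the first nontrivial instance of the plan---distribution of $t_K \bmod p$, equivalently mod-$p^2$ equidistribution of $s'_K$---is the natural test case to try first.

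The \emph{main obstacle} is precisely the jump from mod-$p$ nonvanishing to mod-$p^n$ equidistribution of anticyclotomic heights. Existing equidistribution theorems for CM points are either complex-analytic (Duke, Linnik, Michel--Venkatesh) or $\ell$-adic with $\ell \ne p$; neither framework directly yields uniformity of the $p$-adic digits of a height. A plausible bridge would combine a quantitative Iwasawa-theoretic expansion of the anticyclotomic height modulo $p^n$ with a second-moment or subconvex bound on the associated $p$-adic $L$-values, but absent such an input the argument stalls exactly where Cornut--Vatsal stops. Moreover, the hypothesis that $E$ admits no rational $p$-isogeny is essential: the examples (\ecl{1058.a1},3) and (\ecl{6293.d1},3) show that in the isogeny case even the distinguished line $\cS$ need not exist, so any proof must use the isogeny-freeness (e.g.\ via irreducibility of the mod-$p$ Galois representation) at the step where one extracts a well-defined $t_K$. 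This is presumably why the statement is offered as a conjecture rather than a theorem, and I do not see how to close the gap with current technology.
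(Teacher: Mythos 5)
The statement you were asked to prove is Conjecture~\ref{shadow}: the paper itself offers no proof of it, only the numerical evidence in the mod $p^2$ tables of \S4.2, so there is no argument of the authors' to compare yours against. Your proposal is, accordingly, not a proof but a research program, and you say so explicitly; that self-assessment is accurate. The genuine gap is exactly where you locate it: passing from the mod-$p$ rigidity statement (which the paper does prove, in Proposition~\ref{univH}, under the hypothesis that $\cU_K$ is not $p$-divisible in $H$) to uniformity of the higher $p$-adic digits of the anticyclotomic height requires an equidistribution input for which no technology currently exists --- Cornut--Vatsal gives nonvanishing/nontriviality, not uniform distribution of $p$-adic digits, and the archimedean and $\ell$-adic ($\ell\neq p$) equidistribution theorems for CM points do not see the $p$-adic valuation structure of the heights. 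Your reduction of the conjecture to equidistribution of $t_K\bmod p^n$ in $\Z/p^n\Z$, where $s'_K=(s_0+pt_K,1)$, is a correct and clean reformulation, and identifying the mod-$p^2$ case as the first test is sensible.

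Two small corrections. First, Proposition~\ref{univH} is the paper's partial result toward Conjecture~\ref{anomalModp} (the mod-$p$ rigidity of $L'_K$), not toward Conjecture~\ref{shadow-modp}; the distinction matters because \ref{univH} is conditional on non-$p$-divisibility of $\cU_K$ in $H$, which the paper leaves as an open question. Second, your claim that the isogeny examples show ``$\cS$ need not exist'' is only half right: for (\ecl{1058.a1}, 3) the data indeed fails to single out a mode, but for (\ecl{6293.d1}, 3) the mode $(1,0)$ exists and simply differs from $\cL=(0,1)$. The role of the no-rational-$p$-isogeny hypothesis is therefore subtler than ``existence of $\cS$''; the paper's own remarks suggest it interacts with local $p$-divisibility of the point $R$ and with the filters of \S5 rather than with well-definedness of $t_K$.
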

\bigskip


\section{The distinguished shadow line modulo $p$ for anomalous primes $p$}

When does the shadow line attached to $(E,K,p)$ differ from $\cL$ modulo $p$? It seems there are some subtle issues to consider here (e.g., if $E$ admits a rational $p$-isogeny, then the statistics are not clear).  We would like to find conditions that rule out the non-modal values in the distribution.

\begin{definition}
 The triple $(E,p,K)$ is said to be {\it{filtered data}} if in addition to the assumptions that 
 \begin{itemize}
\item  $E/\Q$ is an elliptic curve of analytic rank $2$, 
\item $p$ is an odd prime of good ordinary anomalous reduction, 
\item $K$ is an imaginary quadratic field satisfying the Heegner hypothesis for $E$ such that the analytic rank of the twisted curve $E^K/\Q$ is 1 and $p$ splits in $K$,
\end{itemize}
it also satisfies the following two conditions:
\begin{itemize}
\item the class number of $K$ is coprime to $p$, and
\item the point $R\in E^K(\Q)$ is not $p$-divisible in $E^K(\Q_p)=E(\Q_p)$. 
\end{itemize}
The distribution of slopes that we find when we restrict to filtered data and vary the discriminant up to some manageable bound is referred to as the {\it filtered slope distribution}.  
\end{definition}

The following table shows the effect of restricting to filtered data.
\begin{center}
 \begin{tabular}{| l  | l | l  | c | c  |}
    \hline
 $(E,p)$  & slope distribution mod $p$  \\  \cdashline{2-3}
  & filtered slope distribution mod $p$ \\ \hline
  (\ecl{433.a1}, 3)   &  [25, 21, 26, 208] \\  \cdashline{2-3}
  &  [14, 18, 19, 126] \\
  \hline
 (\ecl{643.a1}, 3)   &   [25, 28, 139, 36] \\  \cdashline{2-3}
 & [\;0,\; 0,\; 102,\; \,\;0] \\  \hline
 
 (\ecl{1058.a1}, 3) & [23, 25, 10, 25]  \\   \cdashline{2-3}
 & [\;0,\; \ 7,\; 0,\; \,\;0]  \\ \hline
 
 (\ecl{1483.a1}, 3) &[32, 147, 28, 29] \\  \cdashline{2-3}
 &  [\;0,\; 110,\; 0,\; \,\;0] \\ \hline
 
 (\ecl{1613.a1}, 3) &[24, 164, 31, 50]  \\  \cdashline{2-3}
 &  [\;0,\; 133,\; 0,\; \,\;0] \\
 \hline
 
 (\ecl{1933.a1}, 3) &[43, 24, 170, 33] \\   \cdashline{2-3}
 &    [\;0,\; 0,\; 125,\; \,\;0] \\ \hline
 
 (\ecl{6293.d1}, 3) & [23, 21, 22, 46]  \\  \cdashline{2-3}
& [12, 15, 17, 0] \\ \hline
 
 (\ecl{36781.b1}, 3)  &  [33, 24, 116, 19] \\   \cdashline{2-3}
 &    [\;0,\; \;0,\; \;84,\; \,\;0] \\ 
\hline
 (\ecl{433.a1}, 5) & [21, 8, 13, 193, 11, 16] \\   \cdashline{2-3}
 &  [\;0,\; 0,\; 0,\; 175,\; 0,\; \;0]  \\
 \hline
 (\ecl{563.a1}, 5) &  [14, 17, 170, 16, 10, 8]  \\    \cdashline{2-3}
 & [\;0,\; 0,\; 151,\; 0,\; 0,\; \;0] \\ \hline
 (\ecl{997.c1}, 5) &  [10, 17, 23, 15, 192, 14] \\   \cdashline{2-3}
 & [\;0,\; \;0,\; \;0,\; 0,\; 166,\; \;0] \\   \hline
(\ecl{6011.a1}, 7) &  [13, 9, 11, 7, 226, 8, 10, 5]\\   \cdashline{2-3}
 & [\;0,\; 0,\; 0,\; 0, 213, 0,\; 0, \;0]\\
 \hline
 (\ecl{2251.a1}, 11) &  [2, 1, 3, 3, 2, 2, 2, 3, 181, 2, 4, 0]  \\  \cdashline{2-3}
&  [0, 0, 0, 0, 0, 0, 0, 0, 179, 0, 0, 0]  \\ 
\hline

\end{tabular}
\end{center}

\begin{center}
 \begin{tabular}{| l  | l | l  | c | c  |}
    \hline
 $(E,p)$  & slope distribution mod $p$  \\  \cdashline{2-3}
  & filtered slope distribution mod $p$ \\ \hline
 (\ecl{1933.a1}, 13) &   [2, 4, 2, 2, 1, 1, 1, 4, 3, 2, 1, 8, 229, 4] \\   \cdashline{2-3}
 &  [0, 0, 0, 0, 0, 0, 0, 0, 0, 0, 0, 0, 222, 0]  \\
\hline
 (\ecl{709.a1}, 29)     & \tiny{$[0, 0, 1, 0, 0, 0, 0, 1, 1, 0, 1, 1, 0, 1, 0, 0, 0, 0, 0, 0, 0, 0, 0, 0, 196, 0, 0, 2, 0, 0]$}   \\   \cdashline{2-3}
 & \tiny{$[0, 0, 0, 0, 0, 0, 0, 0, 0, 0, 0, 0, 0, 0, 0, 0, 0, 0, 0, 0, 0, 0, 0, 0, 196, 0, 0, 0, 0, 0]$} \\
 \hline
(\ecl{1483.a1}, 31)     &  \tiny{$[1, 1, 0, 0, 0, 1, 0, 0, 0, 0, 0, 0, 0, 0, 0, 0, 1, 0, 0, 0, 196, 0, 0, 1, 0, 0, 2, 0, 0, 0, 0, 0]$}  \\  \cdashline{2-3}
&\tiny{$[0, 0, 0, 0, 0, 0, 0, 0, 0, 0, 0, 0, 0, 0, 0, 0, 0, 0, 0, 0, 195, 0, 0, 0, 0, 0, 0, 0, 0, 0, 0, 0]$}\\
\hline

\end{tabular}
\end{center}

\bigskip

\begin{remark} The elliptic curve \ecl{433.a1} has non-trivial 3-torsion over $\Q_3$. All other curves $(E,p)$ in this list have trivial $p$-torsion in $E(\Q_p)$. \end{remark}
\begin{remark} The distribution for (\ecl{1058.a1}, 3) is striking due to the number of curves eliminated by the filters. Recall that \ecl{1058.a1} has a rational 3-isogeny (to \ecl{1058.a2}) but no local 3-torsion.  For (\ecl{1058.a1}, 3), the vast majority of fields considered had the corresponding point $R$ being locally 3-divisible: for this reason, 141 out of 152 fields were eliminated when considering the filtered distribution. Moreover, four fields had 3 dividing the class number. So only seven out of 152 fields considered survived the two filters, but they all produced the same slope mod $3$. \end{remark}
\begin{remark} The pair (\ecl{6293.d1}, 3) also exhibits interesting behavior. Like \ecl{1058.a1}, the curve \ecl{6293.d1} also admits a rational 3-isogeny but does not have local 3-torsion. (On the other hand, while \ecl{1058.a1} and \ecl{1058.a2} both have trivial rational 3-torsion, \ecl{6293.d2} has rational 3-torsion.) Nevertheless, here imposing the two filters seems to eliminate the modal value.
\end{remark}

\begin{conjecture} \label{anomalModp}
Let $(E, p, K)$ be a triple consisting of 
\begin{itemize}
\item $E$: elliptic curve defined over $\Q$ of analytic rank $2$; 
\item $p$: an odd prime of anomalous good ordinary reduction for $E$;
\item $K$: imaginary quadratic field satisfying the Heegner hypothesis for $E/\Q$ such that the analytic rank of $E^K/\Q$ equals $1$ and $p$ splits in $K$.
\end{itemize} 
Suppose that
\begin{itemize} 
\item $p$-torsion of $E(\Q_p)$ is trivial\footnote{One may be able to weaken this condition on $E/\Q_p$.},
\item $E$ doesn't have a rational $p$-isogeny,
\item $p$ does not divide the class number of $K$,
\item $R\in E(K)$ which generates $E^K(\Q)$ is not $p$-divisible in $E(\Q_p)$.
\end{itemize}
Then the image of the shadow line $L'_K$ in $H\otimes \F_p$ is independent of $K$ and coincides with the natural line $\cL$.
\end{conjecture}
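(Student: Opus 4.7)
The plan is to reduce the conjecture to Proposition~\ref{univH}, which already establishes $L'_K\otimes\F_p = \cL$ whenever $\cU_K$ is not $p$-divisible in $H$. Since $\cL$ depends only on $(E,p)$, the claimed independence of $K$ will follow automatically, and the remaining task is to show that the four filtered hypotheses force $\cU_K$ to be non-$p$-divisible in $H$. Because $\cU_K\subseteq L_K$ are both $\Z_p$-modules of rank $1$ and $L_K$ is saturated in $E(\Q)\otimes\Z_p$, we have $\cU_K = p^k L_K$ for some $k\geq 0$, so non-$p$-divisibility of $\cU_K$ in $H$ is equivalent to $k=0$, i.e., to $\cU_K = L_K$.

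To establish $\cU_K = L_K$, one needs an explicit generator of $\cU_K$ together with a $p$-integral divisibility estimate. In the analytic rank-$2$ setting the ordinary Heegner point $y_K \in E(K)^-$ is itself torsion (since $L'(E/K,1)=0$ in our setting), so a natural generator of $\cU_K$ must arise from a higher-order construction: a Kolyvagin-type derivative along the anticyclotomic tower, projected into the $+$-eigenspace $E(\Q)\otimes\Z_p$ via the Sign Conjecture. Gross--Zagier/Howard/Bertolini--Darmon-type identities should then express the $p$-adic valuation of this generator inside $H$ as a sum of local contributions, the dominant one being the formal-group logarithm $\log_E(\res_\wp R)$ at the split prime $p=\wp\bar\wp$, with $R$ the given generator of $E^K(\Q)=E(K)^-$. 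The four filtered hypotheses are precisely what is required to force each such contribution to be a $\Z_p^{\times}$-factor rather than a $p$-factor: non-$p$-divisibility of $R$ in $E(\Q_p)$ makes $\log_E(\res_\wp R)$ a unit mod~$p$; $p\nmid h_K$ kills ring-class-field obstructions at primes away from $p$; the no-$p$-isogeny hypothesis removes congruence-module contributions; and $E(\Q_p)_p=0$, via Lemma~\ref{LineL}, ensures $\bpsi\neq 0$ so that $\cL$ is well-defined.

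The main obstacle is to make precise the derived Heegner point construction in the $+$-eigenspace and prove the corresponding $p$-integral refinement of Gross--Zagier-type formulas. While Kolyvagin, Howard, and Bertolini--Darmon provide a substantial toolbox in the $-$-eigenspace, translating these results into statements about universal norms in $E(\Q)\otimes\Z_p$ requires both the (still open) Sign Conjecture and careful local-global bookkeeping. The subtle role of a rational $p$-isogeny---visible empirically in the \ecl{1058.a1} and \ecl{6293.d1} data---is likely the most delicate point, as this is precisely where congruence modules and wrong-eigenspace leakage can conspire to break the non-$p$-divisibility of the generator of $\cU_K$.
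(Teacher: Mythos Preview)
The statement you are attempting to prove is labeled in the paper as a \emph{Conjecture}, not a theorem, and the paper does not offer a proof of it. What the paper does prove is Proposition~\ref{univH}, which has the same conclusion but under the additional hypothesis that $\cU_K$ is not $p$-divisible in $H$. Immediately after that proposition, the paper poses as an open Question precisely whether the four filtered hypotheses (trivial $E(\Q_p)_p$, no rational $p$-isogeny, $p\nmid h_K$, and $R$ not $p$-divisible in $E(\Q_p)$) force $\cU_K\not\subseteq pH$. So there is no ``paper's own proof'' to compare against: the authors regard the step you are trying to supply as genuinely open.

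Your reduction to Proposition~\ref{univH} is exactly the paper's own reduction, and your identification of the missing ingredient---non-$p$-divisibility of $\cU_K$ in $H$---is correct. But your proposed route to that ingredient has real gaps. You invoke the Sign Conjecture to place a derived Heegner class in $E(\Q)\otimes\Z_p$; the paper itself treats the Sign Conjecture as open and only verifies it computationally, so you cannot use it as input. You also appeal to a ``$p$-integral refinement of Gross--Zagier-type formulas'' for a higher-derivative Heegner construction in the $+$-eigenspace; no such formula is available in the literature in the form you need, and the paper makes no claim that one exists. Finally, the heuristic that each filtered hypothesis kills one potential source of $p$-divisibility is plausible and matches the empirical picture, but it is not an argument: you have not exhibited any generator of $\cU_K$, let alone computed its index in $L_K$. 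In short, your proposal correctly isolates the crux of the conjecture but does not resolve it, and the paper agrees that this crux remains open.
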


\begin{remark} \hfill
\begin{itemize}
\item[-] Our data shows that if $E$ is an elliptic curve without a rational $p$-isogeny and trivial $E(\Q_p)_p$, the distinguished shadow line $\cS$ may still coincide with $\cL$ even if the class number of $K$ is $p$-divisible and $\res_\wp R\in pE(\Q_p)$ for $\wp|p$.  For example, for (\ecl{643.a1}, 3), and $K = \Q(\sqrt{-1691})$, the class number of $K$ is 18 and $R$ is locally 3-divisible, and yet the slope of the shadow line is $2 \pmod{3}$. 

\item[-] Observe that (\ecl{433.a1}, 3) does not fit the above conjecture because the $3$-torsion of $E(\Q_3)$ is non-trivial.  Note that in this example, for all $K$ that do not fit the conjecture, $R\in E(K)^-$ has order coprime to $3$ in $E(\F_3)$. However, Conjecture \ref{shadow} does appear to hold in this example.

\end{itemize}
\end{remark}

\begin{remark} Note that the $p$-part of the Shafarevich--Tate group of $E/\Q$ is trivial in all the examples that we have considered. Here is one example where that is not the case.  Consider the elliptic curve \ecl{55297189.a1}, which is good, ordinary, and anomalous at 3. Considering the first 53 eligible $K$ and discarding one to an incomplete 8-descent in Magma, we are left with 52 quadratic imaginary fields where we can compute the shadow line. For (\ecl{55297189.a1}, 3), the computation produces the distribution [2, 9, 5, 30] for the 46 fields it completed the computation within the allotted time.   The natural line and the distinguished shadow line coincide in this example.   The filtered distribution is [1, 8, 4, 20] and bears some resemblance to the phenomenon observed for (\ecl{433.a1}, 3).  Indeed, while this curve does not have global torsion, it has a 3-torsion point over $\Q_3$, as was the case for \ecl{433.a1}. 

\end{remark}

\begin{remark} We compiled statistics on how often the $p$-torsion of $E(\Q_p)$ is nontrivial among the set of elliptic curves that have good, ordinary, and anomalous reduction at $p$, from the Cremona database of elliptic curves in LMFDB (conductor $\leq 500000$). 

Fix a prime $p \geq 3$. We first count the number of elliptic curves of rank 2 that have good, ordinary, anomalous reduction at $p$. From this set, we count the number of curves which have at least one $p$-torsion point defined over $\Q_p$. \\
For $p = 3$, the proportion of these curves with local $p$-torsion is approximately 0.3370.\\
For $p = 5$, the proportion is approximately 0.20070\\
For $p = 7$, the proportion is approximately 0.1441.\\
For $p = 11$, the proportion is approximately 0.0934.

If we allow all elliptic curves of good, ordinary, anomalous reduction at $p$ but remove the restriction on rank, the proportions are as follows:\\
For $p = 3$, the proportion of elliptic curves that have  local $p$-torsion among the set that have good, ordinary anomalous reduction at $p$ is approximately 0.3581.\\
For $p = 5$, the proportion is approximately 0.2027.\\
For $p = 7$, the proportion is approximately 0.1439.\\
For $p = 11$, the proportion is approximately 0.0896.
\end{remark}

In our efforts to understand the data that led us to Conjecture \ref{anomalModp} we proved the following result:
\begin{proposition}\label{univH}
Let $(E,p, K)$ be a triple consisting of 
\begin{itemize}
\item $E$: elliptic curve defined over $\Q$ of analytic rank $2$; 
\item $p$: an odd prime of anomalous good ordinary reduction for $E$;
\item $K$: imaginary quadratic field satisfying the Heegner hypothesis for $E/\Q$ such that the analytic rank of $E^K/\Q$ equals $1$ and $p$ splits in $K$.
\end{itemize} 
Suppose that
\begin{enumerate} 
\item the $p$-torsion of $E(\Q_p)$ is trivial, \label{local_torsion}
\item if $[\im \left(E(\Q)\to E(\bF_p)\right)]_p =0$ then $\res_p E(\Q) \not\subseteq p^2 E(\Q_p)$.\label{reduction}
\end{enumerate}
Then if the module of universal norms $\cU_K$ is not $p$-divisible in $H$, the image of the shadow line $L'_K$ in $H\otimes \F_p$ is independent of $K$ and it coincides with the natural line $\cL$.
\end{proposition}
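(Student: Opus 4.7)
The plan is to reduce the proposition to a clean local divisibility assertion at $p$ and then address it there.

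First, I would use the hypothesis that $\cU_K$ is not $p$-divisible in $H$ to promote information from $\cU_K$ up to the shadow line $L'_K = L_K\cap H$. Since $\cU_K \subseteq L'_K$ and $L'_K$ is a $\Z_p$-saturated submodule of $H$ of rank one, any $u \in \cU_K\setminus pH$ is automatically a $\Z_p$-generator of $L'_K$, so the image of $L'_K$ in $H\otimes\F_p$ is the $\F_p$-line spanned by $\bar u := u \bmod pH$. Meanwhile, hypotheses (1) and (2), combined with Lemma~\ref{LineL}, ensure that $\bpsi$ is non-trivial, so $\cL = \ker\bpsi$ is also a line in $H\otimes\F_p$. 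Establishing the equality of lines therefore reduces to the vanishing $\bpsi(\bar u)=0$, equivalently $\psi(u)\in p\,\fE(\Z_p)$, where $\psi\colon H\to\fE(\Z_p)$ is the restriction map from diagram~\eqref{formal}.

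Next I would localize. Writing $u=N_{K_n/K}(y_n)$ for compatible $y_n\in E(K_n)\otimes\Z_p$, fix a prime $\wp$ of $K$ above $p$; because $p$ splits in $K$ we have $K_\wp=\Q_p$, and the tower $K_{n,\wp}/\Q_p$ is eventually totally ramified, with limit a totally ramified $\Z_p$-extension $K_{\infty,\wp}/\Q_p$. Restricting $u$ at $\wp$ and using that $u\in H$ forces each $y_{n,\wp}$ to lie in the local formal group $\fE(\O_{K_{n,\wp}})$, the identity $\psi(u)=N_{K_{n,\wp}/\Q_p}(y_{n,\wp})$ holds for every $n$, so
\[
\psi(u)\ \in\ N_\infty\ :=\ \bigcap_{n\ge 0} N_{K_{n,\wp}/\Q_p}\!\bigl(\fE(\O_{K_{n,\wp}})\bigr)\ \subseteq\ \fE(\Z_p).
\]
The proposition now follows once we establish the local divisibility $N_\infty\subseteq p\,\fE(\Z_p)$.

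This last step is where I expect the bulk of the work and the main obstacle. Under hypothesis (1), $\fE(\Z_p)\cong\Z_p$ as a $\Z_p$-module, so $N_\infty = p^m\fE(\Z_p)$ for some $m\in\{0,1,\ldots,\infty\}$ and what is needed is $m\ge 1$. The anomalous hypothesis is what should drive this: the Frobenius on $T_p\fE$ acts by $\beta = p/\alpha$ with $\alpha\equiv 1\pmod p$, so $\beta\equiv p\pmod{p^2}$ — an ``extra $p$'' in the arithmetic of the formal group that a totally ramified, non-cyclotomic $\Z_p$-extension cannot absorb into the norm. Two natural strategies would be (a) a Coleman power series / local-Iwasawa-duality argument, identifying $\fE(\Z_p)/N_\infty$ with a coinvariant of the Iwasawa module $\varprojlim \fE(\O_{K_{n,\wp}})$ forced to carry nontrivial $p$-torsion by the anomalous shift and the ramification of $K_{\infty,\wp}/\Q_p$; or (b) a direct formal-logarithm computation, pushing the question down to the image of $\Tr_{K_{n,\wp}/\Q_p}\colon\O_{K_{n,\wp}}\to\Z_p$ and exploiting the growing different of the $\Z_p$-extension together with the anomalous normalization of the logarithm to land in $p^2\Z_p$. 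The earlier two phases are largely formal bookkeeping; the real mathematical content of the proposition sits in this local divisibility claim.
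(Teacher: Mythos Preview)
Your overall architecture matches the paper's: reduce to showing that the local image of $\cU_K$ at $\wp$ lands in $p\,\fE(\Z_p)$, then combine with the non-$p$-divisibility of $\cU_K$ in $H$ and the non-triviality of $\bpsi$ (via Lemma~\ref{LineL}) to conclude. Two points deserve correction or sharpening.

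First, a genuine gap: the sentence ``using that $u\in H$ forces each $y_{n,\wp}$ to lie in the local formal group $\fE(\O_{K_{n,\wp}})$'' is false. Membership of $u$ in $H$ says only that $u$ itself has trivial reduction; it says nothing about the preimages $y_n$ upstairs. What you actually need is the equality $\cU_\wp = \hat\cU_\wp$ of local universal norms computed in $E(K_{\wp_n})\otimes\Z_p$ versus in $\fE(\O_{K_{\wp_n}})$. The paper obtains this directly: once the tower is totally ramified, the norm on the residue-field factor is multiplication by $p$, so for $n$ large enough $N_{K_{\wp_n}/K_{\wp_{n-c}}}(y_n)$ already lies in the formal group (take $c$ with $p^c\cdot E(\F_{p^k})\otimes\Z_p=0$). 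This is an easy fix, but as written your chain of equalities $\psi(u)=N_{K_{n,\wp}/\Q_p}(y_{n,\wp})\in N_\infty$ is unjustified.

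Second, your ``main obstacle'' has a much cleaner resolution than either of the strategies you sketch. The paper simply invokes Mazur's classical computation \cite[Corollaries~4.33 and 4.37]{Mazur}, which gives
\[
\fE(\O_\wp)/\hat\cU_\wp \;\simeq\; \Z_p/(1-u)\Z_p,
\]
where $u$ is the unit Frobenius eigenvalue. The anomalous hypothesis gives $u\equiv 1\pmod p$, and the Hasse bound $|u+p/u|\le 2\sqrt{p}$ rules out $u\equiv 1\pmod{p^2}$ for $p\ge 3$; hence $\ord_p(1-u)=1$ exactly, and $\hat\cU_\wp = p\,\fE(\O_\wp)$. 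No Coleman theory or logarithm estimates are needed. Your intuition about the ``extra $p$'' coming from the anomalous condition is pointing at the right phenomenon, but the precise mechanism is this index formula, and it is worth knowing that it is already in the literature.
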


\begin{proof}
Observe that by Lemma \ref{LineL} assumptions \eqref{local_torsion} and \eqref{reduction} imply that $\cL= \ker\big( H \otimes \F_p \to \fE (\Z_p)\otimes \F_p\big)$ is a line.

We know that there exists $m$ such that $K_\infty/K_m$ is totally ramified at every prime above $p$. 
We fix a prime $\wp$ of $K$ above $p$ and $\wp_m$ a prime of $K_m$ above $\wp$. Then for every $n>m$ we have a unique prime $\wp_n$ of $K_n$ such that $\wp_n$ divides $\wp_{n-1}$.  Let $\cO_{\wp_n}$ denote the ring of integers of $\K_{\wp_n}$, the localization of $K_n$ at $\wp_n$. Observe that for $n>m$ we have the following:

\begin{diagram}[small]
0  & \rTo &\fE(\cO_{\wp_n}) & \rTo &  E(K_{\wp_n})\otimes \Z_p &\rTo^\pi & E(\bF_{p^k}) \otimes\Z_p & \rTo & 0\\
    &       &\dTo_{N_{K_n/K_{n-1}}}                &          &\dTo_{N_{K_n/K_{n-1}} }          &        &\dTo_{ \cdot p}&  &\\
 0 &\rTo&\fE(\cO_{\wp_{n-1}})  & \rTo  &E(K_{\wp_{n-1}})\otimes \Z_p    &     \rTo^\pi& E(\bF_{p^k}) \otimes\Z_p&        \rTo & 0\\
\end{diagram}
where $\bF_{p^k}$ is the residue field of $K_{\wp_m}$.

Consider the following two $\Z_p$-submodules of $\fE(\cO_\wp)$ and $E(K_{\wp})\otimes \Z_p$:
\[
\hat \cU_{\wp} = \bigcap_{n \geq 0} N_{K_{\wp_n}/K_\wp}(\fE(\cO_{\wp_n}))\indent \text{and} \indent \cU_{\wp} = \bigcap_{n \geq 0} N_{K_{\wp_n}/K_\wp}(E(K_{\wp_n}) \otimes \Zp).
\]
 It is clear that $ \hat \cU_{\wp} \subseteq \cU_{\wp}$ and now we will see that the other inclusion holds also. 
 
Fix $c\in \mathbb N$ such that $ E(\bF_{p^k}) \otimes\Z_p \overset{\cdot p^c}{\to}  E(\bF_{p^k}) \otimes\Z_p $ is the zero map. Consider $y\in \cU_\wp$, then $y= N_{K_{\wp_n}/K_\wp} (y_n)$ where $y_n \in E(K_{\wp_n})\otimes \Z_p$  for every $n\geq m+c$. Then we have
\[
y= N_{K_{\wp_n}/K_\wp} (y_n)=  N_{K_{\wp_{n-c}}/K_\wp} \left(N_{K_{\wp_n}/K_{\wp_{n-c}}}(y_n)\right)
\]
Using the above commutative diagram we see that $N_{K_{\wp_n}/K_{\wp_{n-c}}}(y_n)$ lies in the image of $\fE(\cO_{\wp_{n-1}})$. Hence $\hat  \cU_{\wp}= \cU_{\wp}$.

By \cite[Corollaries 4.33 \& 4.37]{Mazur} we know that 
\[ 
\fE(\cO_\wp)/ \hat \cU_\wp\simeq \Z_p/(1-u)\Z_p
\] 
where $u$ is the unit $p$-Frobenius eigenvalue of $E$. We need to determine $\ord_p (1-u)$.

Since $p$ is anomalous and $a_p=u+p/u = 1+p - \# E(\F_p)$ it follows that $\ord_p (1-u)\geq 1$. If $\ord_p (1-u)\geq 2$ then $1/u \equiv 1 \pmod {p^2}$ and 
\[
|a_p|=|u+p/u| = |1 +p + p^2 m| \leq 2\sqrt p
\]
where $m\in \Z$, which is not possible for $p\geq 3$. Hence $\fE(\cO_\wp)/ \hat \cU_\wp\simeq \Z/pZ$ which then implies that $\fE(\cO_\wp)/ \hat \cU_\wp = \fE(\cO_\wp)\otimes \F_p$.

Since $\res_\wp \cU_K \subseteq \cU_{\wp}= \hat \cU_{\wp}$ it follows that the image of $\cU_K$ in $H\otimes \F_p$ lies in the kernel of the map $\bpsi: H \otimes \F_p \to \fE (\Z_p)\otimes \F_p$. Finally since $\cU_K\not\subseteq pH$ and $\bpsi$ is non-trivial, it follows that the image of $\cU_K$ in $H\otimes \F_p$ equals $\cL$.

\end{proof}

This result leads us to the following question:

\begin{Quest}
Let $(E,p, K)$ be a triple consisting of 
\begin{itemize}
\item $E$: elliptic curve defined over $\Q$ of analytic rank $2$; 
\item $p$: an odd prime of anomalous good ordinary reduction for $E$;
\item $K$: imaginary quadratic field satisfying the Heegner hypothesis for $E/\Q$ such that the analytic rank of $E^K/\Q$ equals $1$ and $p$ splits in $K$.
\end{itemize} 
Suppose that
\begin{itemize} 
\item $p$-torsion of $E(\Q_p)$ is trivial,
\item $E$ doesn't have a rational $p$-isogeny,
\item $p$ does not divide the class number of $K$,
\item $R\in E(K)$ which generates $E^K(\Q)$ is not $p$-divisible in $E(\Q_p)$.
\end{itemize}
Does it follow that the module of universal norms $\cU_K$ not $p$-divisible in $H$?
\end{Quest}

It is not clear that the rank of the elliptic curve should play a role in this question but we include it since we have no data in other cases.

\section{The map from imaginary quadratic fields to shadow lines}\label{shadowField}

We will now address the question of whether the shadow line $L_K$ uniquely determines its source field $K$. The following table lists all the pairs of elliptic curves and primes $(E,p)$ that we have considered in this paper. For  each pair $(E,p)$ we display the number of quadratic fields for which we were able to compute the corresponding shadow lines (as well as the total number of quadratic fields that we attempted) and the minimal power $n$ of $p$ for which all the shadow lines that we computed are different from each other modulo $p^n$.

\begin{center}
 \begin{tabular}{|| l |c | c | c | c ||}
    \hline
 $(E,p)$   & $E$ anomalous at $p$? & $D \geq $ &  eligible $D$ used & min $n$: slopes are distinct mod $p^n$\\ \hline   
(\ecl{433.a1}, 3) &$\checkmark$& $-5240$ & 280/299 & 8  \\
 (\ecl{643.a1}, 3)  &$\checkmark$& $-4520 $ & 228/239 & $11$ \\
 (\ecl{709.a1}, 3) & &  $-4376$ & 225/241 &10 \\  
(\ecl{997.c1}, 3) && $-4628$  & 256/260 & 10 \\
 (\ecl{1058.a1}, 3) &$\checkmark$& $-8015$ & 93/150&  7 \\
 (\ecl{1483.a1}, 3) &$\checkmark$& $-4631$ & 236/247& 10\\
 (\ecl{1613.a1}, 3) &$\checkmark$& $-4631$ &269/276 &10 \\
 (\ecl{1627.a1}, 3) && $-4691$  & 241/246 & 10 \\
 (\ecl{1933.a1}, 3) &$\checkmark$& $-4835$ & 270/272& 10 \\
 (\ecl{2677.a1}, 3) && $-4559$ & 225/234 & $11$ \\
 (\ecl{6293.d1}, 3) &$\checkmark$&  $-12899$ & 112/149 & 7 \\
 (\ecl{36781.b1}, 3) &$\checkmark$& $-3923$ & 192/206 & 15 \\
 \hline
 (\ecl{433.a1}, 5) &$\checkmark$& $-4631$ & 262/272 & 7 \\ 
 (\ecl{563.a1}, 5) &$\checkmark$&$-3199$ & 235/261 & 8 \\
  (\ecl{709.a1}, 5) && $-4376$ & 266/276 &7 \\
 (\ecl{997.c1}, 5) &$\checkmark$& $-4619$ & 271/273&  8\\
(\ecl{1531.a1}, 5) &&$-4344$ & 254/269 &8 \\
(\ecl{1621.a1}, 5) && $-4811$ & 274/280& 7 \\
(\ecl{1873.a1}, 5) && $-4879$ & 277/284 & 7 \\
(\ecl{1907.a1}, 5) && $-4004$ & 225/240 & 8 \\ 
(\ecl{1933.a1}, 5) && $-4804$  & 282/288 &  6\\
 \hline
 (\ecl{643.a1}, 7) && $-3827$ & 235/248 & 5 \\
(\ecl{709.a1}, 7) && $-3863$ & 244/255 &5 \\
 (\ecl{997.c1}, 7) && $-3811$ & 227/233& 6 \\ 
 (\ecl{1613.a1}, 7) &&$-4623$ & 282/290 & 5  \\
 (\ecl{1627.a1}, 7) &&$-4679$ & 294/298& 6\\
 (\ecl{6011.a1}, 7) &$\checkmark$& $-4591$ & 289/298 &  6 \\
 \hline
(\ecl{2251.a1}, 11) &$\checkmark$& $-3559$ & 205/235  & 5\\
\hline
 (\ecl{1933.a1}, 13) &$\checkmark$& $-4835$ &264/275 & 5 \\
 \hline
 (\ecl{709.a1}, 29)     &$\checkmark$& $-3012$& 204/218& 4\\
 \hline
(\ecl{1483.a1}, 31)    &$\checkmark$ & $-3080$ &203/224 & 4\\
\hline
\end{tabular}
\end{center}

\medskip 

The above data leads us to assume that $L_K$ is uniquely determined by the field $K$. Under this assumption for each $B\in \mathbb N$ we consider the set $\cD_B$ of all imaginary quadratic fields $K$ such that 
\begin{itemize}
\item the Heegner hypothesis holds for $E$ and $K$,
\item the analytic rank of the twisted curve $E^K/\Q$ is 1 and $p$ splits in $\cO_K$,
\item $\disc(K)\geq -B$, 
\end{itemize}

\noindent and define the function $ n_{(E, p)}: \mathbb N \to \mathbb N$ where
$$ n_{(E, p)}(B):= \min\{n\in \mathbb N \,| \, s_K \not\equiv s_{K'}  \pmod{p^n} \indent \forall  K, K'\in \cD_B\}.$$

\begin{question}
What can one say about the function $ n_{(E, p)}$?
\end{question}

\bigskip
\subsection*{Acknowledgements} 
We are grateful to Andrew Sutherland and Noam Elkies for helpful conversations.\\
\indent This material is based upon work supported by the National Science Foundation under grant agreements DMS-1440140,  DMS-1945452 (J.B.), DMS-1352598 (M.\c C.), DMS-1302409 and  DMS-1601028 (B.M). The first author was also supported by the Simons Foundation grant nos. 550023 and 1036361.\\
\indent The authors are grateful to MSRI/SLMath for their hospitality and support on multiple visits.




\begin{thebibliography}{abcde}

\bibitem[B\c CLMN]{ShadowComp} J.S. Balakrishnan, M. \c Ciperiani, J. Lang, B. Mirza, R. Newton,
\textit {Shadow lines in the arithmetic of elliptic curves}, 
Directions in Number Theory: Proceedings of the WIN3 Workshop, Springer, 2016, 33--55.

\bibitem[B\c CS]{BCS}   J.S. Balakrishnan, M. \c Ciperiani, and W. A. Stein,
\textit{$p$-adic heights of Heegner points and anticyclotomic $\Lambda$-adic regulators},
Math.Com. 84 (2015), no. 292, 923--954.

\bibitem[B\c CMR]{github} J.S. Balakrishnan, M. \c Ciperiani, B. Mazur, and K. Rubin, \url{https://github.com/jbalakrishnan/ShadowLineDistributions}.

\bibitem[Be95]{Bertolini} M. Bertolini, 
\textit{Selmer groups and Heegner points in anticyclotomic $\Zp$-extensions},
 Compositio Math. 99 (1995), no. 2, 153--182.


\bibitem[Br00]{Bradshaw} R.W. Bradshaw, 
\textit{Provable computation of motivic L-functions},
Thesis (Ph.D.) University of Washington, 2010.


\bibitem[\c CW08] {CW} M. \c Ciperiani and A. Wiles, 
\textit{Solvable points on genus one curves}, 
Duke Math. J. 142 (2008), 381--464.
 
\bibitem[Co02] {Cornut} C. Cornut,
\textit{Mazur's conjecture on higher Heegner points},
Invent. Math. 148 (2002), 495--523.

\bibitem[Cr97]{Cremona} J.E. Cremona,
\textit{Algorithms for modular elliptic curves},
 Second edition, Cambridge University Press, Cambridge, 1997.
 
\bibitem[GZ]{GZ} B. Gross and D. Zagier,
\textit{ Heegner points and derivatives of L-series}, 
Invent. Math. 84 (1986), no. 2, 225--320. 
 
\bibitem[Ka04]{Kato} K. Kato, 
\textit{$p$-adic Hodge theory and values of zeta functions of modular forms, Cohomologies p-adiques et application arithm\'etiques. III}, 
Ast\'erisque, vol. 295, Soci\'et\'e Math\'ematique de France, Paris, 2004. 
 
\bibitem[Ko90]{Kolyvagin} V.A. Kolyvagin,
\textit{Euler systems},
The Grothendieck Festschrift, Vol. II, Progr. Math., 87, Birkhauser Boston, Boston, MA, 1990, 435--483. 

\bibitem[L]{lmfdb} The {LMFDB Collaboration},
\textit{The L-functions and Modular Forms Database},
\url{http://www.lmfdb.org}, 2024, [Online; accessed 1 May 2024].
 
 \bibitem[Ma72]{Mazur} B. Mazur,
\textit{Rational points of abelian varieties with values in towers of number fields},
  Invent. Math. 18 (1972), 183--266. 
  
\bibitem[MR03]{MRgrowth} B. Mazur and K. Rubin,
\textit{Studying the growth of Mordell--Weil},
Doc. Math. 2003, Extra Vol., 585--607. 

\bibitem[MST06]{MST} B. Mazur, W. Stein, and J. Tate. 
\textit{Computation of $p$-adic heights and log convergence}, 
Doc. Math. 2006, Extra Vol., 577--614.

\bibitem[MT83]{MTpairing}B. Mazur and J. Tate, 
\textit{Canonical height pairings via biextensions},
Arithmetic and geometry, Vol. I, 195--237, Progr. Math., 35, Birkhauser Boston, Boston, MA, 1983. 

\bibitem[Ne01]{Nekovar} J. Nekov\'a\v r,
\textit{On the parity of ranks of Selmer groups. II},
C. R. Acad. Sci. Paris S\'er. I Math. 332 (2001), no. 2, 99--104.

\bibitem[Pa02]{Papanikolas} M. A. Papanikolas,   
\textit{ Universal Norms on Abelian Varieties over Global Function Fields} ,
Journal of Number Theory 94(2002), no. 2, 326--342. 

\bibitem[Po01]{Poincare} H. Poincar\'e,
\textit{Sur les propriétés arithmétiques des courbes algébriques},
Journal de Mathématiques Pures et Appliquées (1901), Volume: 7, page 161--234.




\bibitem[SU14]{SU} C. Skinner and E. Urban,
\textit{The Iwasawa main conjectures for $\mathrm{GL}_2$},
Invent. Math. 195 (2014), no. 1, 1--277.

\bibitem[SW13]{SW} W. Stein and C. Wuthrich, 
\textit{Algorithms for the arithmetic of elliptic curves using Iwasawa theory},
Math. Comp. 82 (2013), no. 283, 1757--1792. 

\bibitem[Va03]{Vatsal} V. Vatsal,
\textit{Special values of anticyclotomic L-functions},
Duke Math. J. 116 (2003), 219--261.

\bibitem[Wa97]{Washington} L. C. Washington, 
\textit{Introduction to cyclotomic fields},
Second edition, Graduate Texts in Mathematics, 83. Springer-Verlag, New York, 1997. 

\bibitem[Wu04]{Wuthrich} C. Wuthrich, 
\textit{On $p$-adic heights in families of elliptic curves},
J. London Math. Soc. (2) 70 (2004), no. 1, 23--40.

\end{thebibliography}
\end{document}